\documentclass[reqno]{amsart}
\usepackage{srcltx}
\usepackage{eurosym}
\usepackage{mathtools}
\usepackage{amsmath}
\usepackage{amsfonts}
\usepackage{amssymb}
\usepackage{amsthm}
\usepackage{graphicx}
\usepackage{mathrsfs}
\usepackage{xcolor}
\usepackage{exscale}
\usepackage{latexsym}
\usepackage[colorlinks,plainpages=true,pdfpagelabels,hypertexnames=true,colorlinks=true,pdfstartview=FitV,linkcolor=blue,citecolor=red,urlcolor=black]{hyperref}
\PassOptionsToPackage{unicode}{hyperref}
\PassOptionsToPackage{naturalnames}{hyperref}
\usepackage{enumerate}
\usepackage[shortlabels]{enumitem}
\usepackage{bookmark}
\usepackage{wasysym}
\usepackage{esint}
\usepackage[ddmmyyyy]{datetime}
\usepackage[margin=3cm]{geometry}
\makeatletter
\g@addto@macro\normalsize{%
  \setlength\abovedisplayskip{2pt}
  \setlength\belowdisplayskip{3pt}
  \setlength\abovedisplayshortskip{4pt}
  \setlength\belowdisplayshortskip{4pt}
}
\numberwithin{equation}{section}
\everymath{\displaystyle}
\newtheorem{theorem}{Theorem}[section]
\newtheorem{lemma}[theorem]{Lemma}
\newtheorem{corollary}[theorem]{Corollary}
\newtheorem{proposition}[theorem]{Proposition}

\newtheorem{definition}[theorem]{Definition}
\newtheorem{remark}[theorem]{Remark}        
  
\numberwithin{equation}{section}
\newcommand{\pa}{\partial}

\newcommand{\pp}{\mathcal{P}}
\newcommand{\tom}{\tilde{\om}}

\makeatletter
\newcommand{\vo}{\vec{o}\@ifnextchar{^}{\,}{}}
\makeatother

\def\YYint#1#2#3{{\setbox0=\hbox{$#1{#2#3}{\iint}$}
    \vcenter{\hbox{$#2#3$}}\kern-.50\wd0}}


\def\XXint#1#2#3{{\setbox0=\hbox{$#1{#2#3}{\int}$}
    \vcenter{\hbox{$#2#3$}}\kern-.50\wd0}}

\makeatletter
\def\namedlabel#1#2{\begingroup
   \def\@currentlabel{#2}%
   \label{#1}\endgroup
}
\makeatother
\makeatletter
\newcommand{\rmh}[1]{\mathpalette{\raisem@th{#1}}}
\newcommand{\raisem@th}[3]{\hspace*{-1pt}\raisebox{#1}{$#2#3$}}
\makeatother

\newcommand{\redref}[2]{\texorpdfstring{\protect\hyperlink{#1}{\textcolor{black}{(}\textcolor{red}{#2}\textcolor{black}{)}}}{}}
\newcommand{\redlabel}[2]{\hypertarget{#1}{\textcolor{black}{(}\textcolor{red}{#2}\textcolor{black}{)}}}
\newcommand{\descitem}[2]{\item[{\bf #1}]\label{#2}}
\newcommand{\descref}[2]{\hyperref[#1]{\textnormal{\textcolor{black}{(}\textcolor{blue}{\bf #2}\textcolor{black}{)}}}}

\newcommand{\dref}[2]{\hyperref[#1]{\textcolor{black}{\bf #2}}}
\newcommand{\tf}{\tilde{f}}
\newcommand{\tg}{\tilde{g}}
\newcommand{\tF}{\tilde{F}}
\newcommand{\tL}{\tilde{L}}

\newcommand{\tde}{{\tilde{\delta}}}
\newcommand{\tal}{\tilde{\al}}
\newcommand{\mr}{{{r}_0}}

\newcommand\RR{\mathbb{R}}

\newcommand\NN{\mathbb{N}}
\newcommand{\al}{\alpha}
\newcommand{\be}{\beta}

\newcommand{\de}{\delta}
\newcommand{\ve}{\varepsilon}

\newcommand{{\ka}}{\kappa}

\newcommand{\om}{\omega}
\newcommand{\la}{\lambda}

\newcommand{\Ga}{\Gamma}

\newcommand{\Th}{\Theta}

\newcommand{\Om}{\Omega}

\newcommand{\La}{\Lambda}


\DeclareMathOperator{\dini}{Dini}
\DeclareMathOperator{\loc}{loc}
\DeclareMathOperator{\holder}{Hdr}
\DeclareMathOperator{\md}{{\bf Mod}}
\DeclareMathOperator{\one}{{\bf I}}
\DeclareMathOperator{\two}{{\bf II}}
\DeclareMathOperator{\three}{{\bf III}}
\DeclareMathOperator{\four}{{\bf IV}}

\newcommand{\iprod}[2]{\langle #1,  #2\rangle}

\newcommand{\abs}[1]{\left| #1\right|}
\newcommand{\lbr}[1][(]{\left#1}
\newcommand{\rbr}[1][)]{\right#1}
\newcommand{\txt}[1]{\qquad \text{#1} \quad}
\usepackage[onehalfspacing]{setspace}
\usepackage[titletoc,toc,page]{appendix}

%

\begin{document}

\title{Borderline regularity for fully nonlinear equations in Dini domains}

\author{Karthik Adimurthi}
\address{Department of Mathematical Sciences, Seoul National University, Seoul 08826, Korea.}
\email{karthikaditi@gmail.com and kadimurthi@snu.ac.kr}
\thanks{The first author was supported in part by the National Research Foundation of Korea grant NRF-2015R1A4A1041675.}

\author{Agnid Banerjee}
\address{Tata Institute of Fundamental Research - Centre for Applicable Mathematics, Bangalore, Karnataka, India 560065.}
\email{agnid@tifrbng.res.in and agnidban@gmail.com}

\subjclass[2000]{Primary 35J25, 35J60}



\keywords{Fully Nonlinear equations, borderline gradient continuity, $\dini$-domains}

\begin{abstract}
In this paper, we prove borderline gradient continuity of viscosity solutions to  Fully nonlinear elliptic  equations at the boundary of a $C^{1,\dini}$-domain. These results (see Theorem \ref{main_thm}) are a sharpening of the boundary gradient estimate proved in \cite{MR2853528} following the borderline interior gradient regularity estimates established  in \cite{MR3169795}. We however mention that, differently  from the approach in \cite{MR3169795}  which is  based on $W^{1,q}$ estimates, our  proof is  slightly more geometric and is  based on compactness arguments inspired by the techniques in the fundamental works of Caffarelli as in  \cite{MR1005611,MR1038360,MR1038359}.
\end{abstract}

\maketitle

\tableofcontents
%

\section{Introduction}
The aim of this paper is to   obtain pointwise gradient continuity estimates  upto the boundary for viscosity solutions of 
\begin{equation}
\label{main}
\left\{\begin{array}{ll}
F(x,D^2 u ) = f & \text{in}\ \Om,\\
u = g & \text{on} \ \pa \Om,
\end{array}\right.
\end{equation}
under minimal  regularity  assumptions  on $f,g$ and  $\pa \Om$.

\medskip

The fundamental role of these regularity estimates in the theory of elliptic and parabolic partial differential equations is well known.  In order to put our results in the correct historical perspective, we note that in 1981,  E. Stein in his visionary work \cite{MR607898} showed the following "limiting" case of Sobolev embedding theorem.
\begin{theorem}\label{stein}
Let $L(n,1)$ denote the standard Lorentz space, then the following implication holds:
\[\nabla v \in L(n,1) \ \implies \ v\  \text{is continuous}.\]  
\end{theorem}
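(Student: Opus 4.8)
The plan is to extract from $\nabla v\in L(n,1)$ an explicit modulus of continuity for a good representative of $v$, expressed through the decreasing rearrangement $(\nabla v)^*$ of $|\nabla v|$. Since $L(n,1)\subset L^1_{\loc}$ we have $v\in W^{1,1}_{\loc}$, and the classical pointwise Sobolev (potential) estimate is available: writing $I_1^{\rho}h(x):=\int_{B_\rho(x)}|x-y|^{1-n}h(y)\,dy$ for the truncated Riesz potential, one has for a.e.\ $x$ and all $\rho>0$
\[
\Big|v(x)-\frac{1}{|B_\rho(x)|}\int_{B_\rho(x)}v\Big|\le C(n)\,I_1^{\rho}|\nabla v|(x),
\]
and consequently $|v(x)-v(x')|\le C(n)\big(I_1^{2r}|\nabla v|(x)+I_1^{2r}|\nabla v|(x')\big)$ whenever $|x-x'|=r$. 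So everything reduces to showing that $I_1^{\rho}|\nabla v|\to 0$ as $\rho\to0$, uniformly on compact sets.

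For this I would invoke the Hardy--Littlewood rearrangement inequality. For fixed $x$, the decreasing rearrangement of $y\mapsto|x-y|^{1-n}\mathbf{1}_{B_\rho(x)}(y)$ is a dimensional multiple of $t\mapsto t^{\frac1n-1}$ on $(0,\om_n\rho^n)$ and vanishes thereafter, whence
\[
I_1^{\rho}|\nabla v|(x)\ \le\ c_n\int_0^{\om_n\rho^n}(\nabla v)^*(t)\,t^{\frac1n-1}\,dt.
\]
The right-hand side is independent of $x$, and since $\nabla v\in L(n,1)$ means exactly that $\int_0^1(\nabla v)^*(t)\,t^{\frac1n-1}\,dt<\infty$, it tends to $0$ as $\rho\to0$ by dominated convergence. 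Combined with the oscillation estimate, this shows that $v$ agrees a.e.\ with a function that is uniformly continuous on compact subsets, with modulus of continuity $\om_v(r)\le C(n)\int_0^{\om_n(2r)^n}(\nabla v)^*(t)\,t^{\frac1n-1}\,dt$. (Equivalently, one can telescope $v(x)-\frac1{|B_r|}\int_{B_r}v$ along the dyadic balls $B_{2^{-j}r}(x)$, bound each increment by Poincar\'e on an annulus, and perform the same rearrangement bookkeeping on the resulting series.)

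The conceptual crux, and the step that must be handled with care, is the passage from the Riesz-potential bound to its vanishing: it is precisely the Lorentz second index $1$ --- that is, $\nabla v\in L(n,1)$ rather than merely $\nabla v\in L^n=L(n,n)$ --- that makes $\int_0^1(\nabla v)^*(t)\,t^{\frac1n-1}\,dt$ finite and forces the tail to go to zero. With $L^n$ in place of $L(n,1)$ the Poincar\'e-and-H\"older estimates still run but deliver only a BMO-type bound on $v$, and $v(x)=\log\log(1/|x|)$ shows that continuity then genuinely fails; this is the sense in which Theorem~\ref{stein} is a limiting case of the Sobolev embedding. The remaining ingredients --- the inclusion $v\in W^{1,1}_{\loc}$, the pointwise oscillation estimate, and the choice of the continuous representative --- are standard facts about Sobolev functions (mollification together with the fundamental theorem of calculus along segments through the base point) and present no additional difficulty.
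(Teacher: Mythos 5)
The paper does not prove Theorem \ref{stein} at all: it is quoted as a classical result of Stein \cite{MR607898} and used purely as motivation, so there is no in-paper argument to compare against. Judged on its own, your sketch is a correct rendition of the standard potential-theoretic proof: the pointwise oscillation bound by the truncated Riesz potential $I_1^{\rho}|\nabla v|$ at Lebesgue points, the Hardy--Littlewood rearrangement inequality giving $I_1^{\rho}|\nabla v|(x)\le c_n\int_0^{\om_n\rho^n}(\nabla v)^*(t)\,t^{\frac1n-1}\,dt$ uniformly in $x$, the identification of $\int_0^\infty(\nabla v)^*(t)\,t^{\frac1n-1}\,dt$ with the $L(n,1)$ quasinorm (equivalent, via the layer-cake formula, to the distribution-function definition the paper uses), and absolute continuity of the integral to make the tail vanish. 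The only step left implicit is the chaining of averages needed to pass from the single-ball oscillation estimate to $|v(x)-v(x')|$, but your parenthetical dyadic-telescoping remark covers this, and your $\log\log(1/|x|)$ example correctly isolates why the second Lorentz index $1$ is essential. Two contextual remarks: Stein's original note proves the stronger statement that $v$ is differentiable a.e.\ (via Littlewood--Paley theory), with continuity as the easier companion statement; and the rearrangement/potential route you take is precisely the mechanism this paper itself relies on downstream, since the convergence ${\bf \tilde I}^{f}_{q}(x,r)\to0$ in \eqref{convergence} for $f\in L(n,1)$, $q<n$, is established by the same bookkeeping with $f^{**}(\rho)\rho^{q/n}$ that appears in \eqref{eq4.344}. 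So your proof is not only correct but is the natural one from the paper's point of view.
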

The Lorentz space $L(n,1)$ appearing in Theorem \ref{stein}  consists of those measurable functions $g$ satisfying the condition
\[
\int_{0}^{\infty} |\{x: g(x) > t\}|^{1/n} dt < \infty.
\]
Theorem \ref{stein} can be regarded as the limiting case of Sobolev-Morrey embedding that asserts
\[
\nabla v \in L^{n+\ve} \implies v \in C^{0, \frac{\ve}{n+\ve}}.
\]
Note that indeed $L^{n+\ve} \subset L(n, 1) \subset L^{n}$ for any $\ve>0$ with all the inclusions being strict.  Now Theorem \ref{stein} coupled with the standard Calderon-Zygmund theory  has the following interesting consequence.
\begin{theorem}\label{st}
$\Delta u \in L(n,1) \implies \nabla u$ is continuous.
\end{theorem}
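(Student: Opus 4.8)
The plan is to combine Theorem~\ref{stein} with Calder\'on--Zygmund theory extended to Lorentz spaces. Working locally on a ball $B = B_{2R}(x_0) \subset \Om$, set $f := \Delta u \in L(n,1)(B)$. Inserting a cut-off and using the classical interior representation for the Hessian of the Newtonian potential, one can write $\pa_i\pa_j u = c_n \delta_{ij} f + (Tf)_{ij}$ on $B_R(x_0)$, modulo lower-order terms controlled by $\|u\|_{L^p(B)}$, where each $(Tf)_{ij}$ is a Calder\'on--Zygmund operator applied to $f$, i.e.\ convolution against a kernel that is smooth off the origin, homogeneous of degree $-n$, and has vanishing mean on spheres. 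Such operators are bounded on $L^p(\RR^n)$ for every $1 < p < \infty$; choosing $1 < p_0 < n < p_1 < \infty$ and applying real interpolation, one finds they are also bounded on $(L^{p_0}, L^{p_1})_{\theta,1} = L(n,1)$, with $\tfrac1n = \tfrac{1-\theta}{p_0} + \tfrac{\theta}{p_1}$. Hence $D^2 u \in L(n,1)(B_R(x_0))$, and since $x_0$ was arbitrary, $D^2 u \in L(n,1)_{\loc}(\Om)$.

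The conclusion is then immediate. Fix $i \in \{1,\dots,n\}$ and put $v := \pa_i u$; each entry $\pa_j v = \pa_j\pa_i u$ lies in $L(n,1)_{\loc}(\Om)$, so $\nabla v \in L(n,1)_{\loc}(\Om)$. By Theorem~\ref{stein}, $v$ has a continuous representative. Since $i$ was arbitrary, every first-order partial derivative of $u$ is continuous, i.e.\ $\nabla u$ is continuous, which is the assertion.

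The substantial input here is Stein's Theorem~\ref{stein}, which is quoted; the remaining points need care but are routine: (a) the interior representation $\pa_i\pa_j u = c_n\delta_{ij}f + (Tf)_{ij}$ presupposes some a priori regularity of $u$ (say $u \in W^{2,p}_{\loc}$ for one $p>1$, otherwise one approximates $f$ and passes to the limit using the uniform bounds); (b) reaching the endpoint $q=1$ in $L(n,1)$ is precisely where one invokes real interpolation between two strong-type $(p_i,p_i)$ bounds straddling $p=n$, so the full $L^p$-range of $T$ is needed, not a single exponent; (c) the cut-off localizes the estimate at the cost of the harmless lower-order term $\|u\|_{L^p(B)}$. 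Alternatively one can bypass Theorem~\ref{stein} and argue directly from $u = \Gamma * f + (\text{harmonic})$, with $\Gamma$ the fundamental solution of $\Delta$: since $\nabla\Gamma(x) \sim |x|^{1-n}$ lies in the weak space $L^{\frac{n}{n-1},\infty}(B_r)$ with quasinorm independent of $r$, O'Neil's H\"older inequality for Lorentz spaces gives $\int_{B_r(z)} |\nabla\Gamma(x-y)|\,|f(y)|\,dy \lesssim \|f\|_{L(n,1)(B_r(z))}$, which tends to $0$ as $r\to 0$ uniformly in $z$; truncating the convolution at radius $r$ then exhibits $\nabla u$ as a locally uniform limit of continuous functions.
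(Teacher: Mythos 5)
Your argument is correct and is exactly the route the paper indicates (it offers no detailed proof, only the remark that Theorem \ref{st} follows from Theorem \ref{stein} ``coupled with the standard Calderon--Zygmund theory''): you use Calder\'on--Zygmund boundedness plus real interpolation to get $D^2u\in L(n,1)_{\loc}$ and then apply Stein's theorem to each $\pa_i u$. The technical caveats you flag (localization, the a priori regularity needed for the representation formula, and the endpoint via interpolation) are the right ones, and your alternative direct argument via $\nabla\Gamma\in L^{\frac{n}{n-1},\infty}$ and O'Neil's inequality is also sound.
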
 
The analogue of Theorem \ref{st}  for general nonlinear and possibly degenerate elliptic and parabolic equations has become accessible not so long ago through a rather sophisticated and powerful nonlinear potential theory (see for instance \cite{MR2823872,MR2900466,MR3174278} and the references therein).  The first breakthrough in this direction came up in the work  of Kuusi and Mingione in \cite{MR3004772} where they showed that the analogue of Theorem \ref{st}  holds for operators modelled after the $p$-laplacian. Such a result  was subsequently generalized to $p$-laplacian type systems by the same authors in  \cite{MR3247381}.

Since then, there has been several generalizations of Theorem \ref{st} to operators with other kinds of nonlinearities and in context of  fully nonlinear elliptic equations, the  analogue of Theorem \ref{st} has been established by Daskalopoulos-Kuusi-Mingione in  
\cite{MR3169795}. More precisely, they showed that (see Theorem 1.1 in \cite{MR3169795})
\begin{theorem}\label{dkm}
Let $u$ be a viscosity solution to
\begin{equation}\label{fl}
F(x,D^2u)= f\   \txt{in} \Om,
\end{equation}
where $F$ is uniformly elliptic fully nonlinear operator and $f \in L(n, 1)$. Then there exists $\theta \in (0, 1)$ depending only on $n$ and the ellipticity constants of $F$  such that if $F(.)$ has $\theta$-BMO coefficients, then $Du$ is continuous in the interior of $\Om$. 
\end{theorem}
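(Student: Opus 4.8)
The plan is to reduce the statement to a pointwise one --- that $u$ is differentiable at every point $x_0$ of a fixed interior ball, with a modulus of continuity for the map $x_0\mapsto Du(x_0)$ that is uniform on compact subsets of $\Om$ --- and then to invoke the routine real-variable fact that such a uniform pointwise modulus forces $Du$ to be continuous. After the standard normalizations ($F(x_0,0)=0$, rescaling so that $B_1\Subset\Om$, $\|u\|_{L^\infty(B_1)}\le1$, and $\|f\|_{L(n,1)(B_1)}$ together with the $\theta$-BMO seminorm of the coefficients taken as small as we like), I would run a Caffarelli-type compactness-and-iteration scheme: fix $x_0$, let $r_j:=\rho^{\,j}$ for a small dyadic ratio $\rho\in(0,1)$, and construct affine maps $\ell_j(x)=a_j+b_j\cdot(x-x_0)$ with
\[
\|u-\ell_j\|_{L^\infty(B_{r_j}(x_0))}\ \le\ \sigma_j\,r_j ,\qquad j\ge0,
\]
whose excess $\sigma_j\ge0$ obeys a recursion $\sigma_{j+1}\le\lambda\,\sigma_j+C\,\mathbf{k}_j$ with $\lambda\in(0,1)$ and $\mathbf{k}_j:=r_j^{\,1-n}\!\int_{B_{r_j}(x_0)}|f|\,dx$ the scale-invariant ``Riesz slab'' of $f$ at scale $r_j$. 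The arithmetic fact that singles out $L(n,1)$ as the right class is that $\sum_{j\ge j_0}\mathbf{k}_j\le C\int_{B_{r_{j_0}}(x_0)}|y-x_0|^{\,1-n}|f(y)|\,dy$ and that the order-one Riesz potential on the right maps $L(n,1)$ into $L^\infty$ --- this is precisely the content of Stein's endpoint estimate, Theorem \ref{stein} --- so that $\sum_{j\ge j_0}\mathbf{k}_j\le C_n\|f\|_{L(n,1)(B_{r_{j_0}}(x_0))}$, which tends to $0$ as $j_0\to\infty$, uniformly in $x_0$ over compact subsets of $\Om$ by absolute continuity of the Lorentz norm.

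The inductive step is an approximation lemma in the spirit of the works \cite{MR1005611,MR1038360,MR1038359}. Rescaling $B_{r_j}(x_0)$ to $B_1$, the function $\widetilde w(y):=(u-\ell_j)(x_0+r_jy)/r_j$ solves $G_j(y,D^2\widetilde w)=\widetilde f_j$ in $B_1$ with the same ellipticity constants, $\|\widetilde w\|_{L^\infty(B_1)}\le\sigma_j$, $\|\widetilde f_j\|_{L^n(B_1)}=\|f\|_{L^n(B_{r_j})}$ small, and $G_j$ of BMO seminorm $\le\theta$. By the Krylov--Safonov interior Hölder estimate for $L^n$ right-hand sides, bounded families of such $\widetilde w$ are precompact in $C^0_{\mathrm{loc}}(B_1)$; by the stability of viscosity solutions under locally uniform convergence, every limit $h$ solves a frozen homogeneous equation $F_0(D^2h)=0$; and Caffarelli's interior $C^{1,\bar\al}$ estimate for $h$ yields an affine $\ell$ with $\sup_{B_\rho}|h-\ell|\le C\rho^{\,1+\bar\al}\|h\|_{L^\infty(B_{1/2})}$. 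Choosing $\rho$ so that $C\rho^{\,\bar\al}\le\tfrac14$, rescaling back, and defining $\ell_{j+1}$ by correcting $\ell_j$ with $\ell$, one arrives at $\sigma_{j+1}\le(\tfrac12+C\theta)\,\sigma_j+C\,(\text{error from the right-hand side})$; with $\theta$ then taken small (everything so far depending only on $n$ and the ellipticity constants) this is the asserted contraction, provided the right-hand-side error is of the order of $\mathbf{k}_j$.

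The step I expect to be the main obstacle is exactly that: making the right-hand-side error appear at the level of the Riesz slab $\mathbf{k}_j$, i.e.\ of the $L^1$-average $r_j^{\,1-n}\!\int_{B_{r_j}(x_0)}|f|$, rather than at the level of the cruder $L^n$-average $\|f\|_{L^n(B_{r_j})}$. The latter is what a blunt application of the Aleksandrov--Bakelman--Pucci estimate delivers, but $\sum_j\|f\|_{L^n(B_{r_j})}$ need not converge for $f\in L(n,1)$, so it would not close the iteration. In \cite{MR3169795} this difficulty is bypassed through $W^{1,q}$ estimates for the gradient; in the present, more geometric, approach one instead has to recover the sharp bound by a scale-by-scale analysis on the contact set --- splitting $f$ according to its size, handling the bounded part by the interior $C^{1,\bar\al}$ perturbation theory of \cite{MR1005611} and iterating the ABP measure estimate for the upper and lower contact sets on the remainder. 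Carrying this out, together with the by-now-standard Krylov--Safonov and viscosity-stability inputs for $L^n$ right-hand sides and the uniformity of all constants in $x_0$, is the technical heart of the argument.

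Granting the recursion $\sigma_{j+1}\le\lambda\sigma_j+C\mathbf{k}_j$ with $\lambda<1$, the conclusion is mechanical. Since $\sum_j\mathbf{k}_j<\infty$ one gets $\sigma_j\le\lambda^{\,j}\sigma_0+C\sum_{i<j}\lambda^{\,j-1-i}\mathbf{k}_i$, hence $\sigma_j\to0$ and $\sum_j\sigma_j<\infty$; as $|b_{j+1}-b_j|\le\rho^{-1}\sigma_j+\sigma_{j+1}$, the slopes $(b_j)$ form a Cauchy sequence with limit $b_\infty=:Du(x_0)$, while $|a_j-u(x_0)|\le\sigma_jr_j$ and $|b_\infty-b_j|\le C\sum_{i\ge j}\sigma_i$. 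Consequently, for $r\in(r_{j+1},r_j]$,
\[
\sup_{B_r(x_0)}\big|u(x)-u(x_0)-Du(x_0)\cdot(x-x_0)\big|\ \le\ C\,r\sum_{i\ge j}\sigma_i\ =\ o(r),
\]
so $u$ is differentiable at $x_0$; and since $\sum_{i\ge j}\sigma_i\le C\big(\lambda^{\,j/2}\sigma_0+\sum_{i\ge\lfloor j/2\rfloor}\mathbf{k}_i\big)\le C\big(\lambda^{\,j/2}+\|f\|_{L(n,1)(B_{r_{\lfloor j/2\rfloor}}(x_0))}\big)$, the little-$o$ holds at a rate uniform for $x_0$ in compact subsets of $\Om$. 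Together with the analogous uniform control coming from the BMO-continuity of the coefficients, this furnishes a uniform modulus of continuity for $x_0\mapsto Du(x_0)$, and therefore $Du$ is continuous in the interior of $\Om$, as claimed.
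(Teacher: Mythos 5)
Your overall architecture --- dyadic affine approximation via a compactness/approximation lemma, summability of the scale-by-scale data contributions for $f\in L(n,1)$, and then the routine passage from a uniform pointwise $o(r)$ expansion to continuity of $Du$ --- is exactly the route this paper takes (its Section \ref{section4} machinery, specialized to the interior, yields Proposition \ref{prop5.1}; the theorem itself is quoted from \cite{MR3169795}, which instead goes through $W^{1,q}$ estimates). Your Lorentz-space arithmetic is also fine. The gap is precisely the step you yourself flag as ``the main obstacle,'' and it is a real one: for non-divergence fully nonlinear equations there is no known way to make the per-scale error appear at the level of the $L^1$ Riesz slab $\mathbf{k}_j=r_j^{1-n}\int_{B_{r_j}}|f|$. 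The ABP/Escauriaza theory bottoms out at $L^q$ averages with $q>n-n_0$, and your proposed remedy (splitting $f$ by size and iterating the ABP measure estimate on contact sets) is neither carried out nor the mechanism used in \cite{MR3169795} or here; as written, the recursion $\sigma_{j+1}\le\lambda\sigma_j+C\,\mathbf{k}_j$ is unsubstantiated.

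The fix does not require the $L^1$ slab at all. Fix $q\in(n-n_0,n)$ and replace $\mathbf{k}_j$ by $\om_2(r_j)=C\,r_j\lbr\fint_{B_{r_j}}|f|^q\,dx\rbr^{1/q}$, which is what the compactness lemma actually controls: its hypothesis is smallness of $\|\tf\|_{L^q(B_1)}$ after rescaling (compare \eqref{eq4.3} and the normalization \eqref{eq4.29}, where one divides by the modulus $\om_4(\la^k)$ precisely so that the rescaled datum is uniformly small at every scale). The point that rescues $L(n,1)$ is that, because $q<n$ strictly, one still has $\sum_i\om_2(\la^i)\le C\int_0^{|B_2|}\left[f^{**}(\rho)\rho^{q/n}\right]^{1/q}\,\frac{d\rho}{\rho}<\infty$ for $f\in L(n,1)$ --- this is \eqref{eq4.33}--\eqref{eq4.344}, i.e.\ \cite[Eqs.\ (3.4) and (3.13)]{MR3169795} --- so the dyadic sum converges exactly as your $\sum_j\mathbf{k}_j$ would, and the rest of your argument (Cauchy slopes, uniform $o(r)$ expansion, continuity of $Du$) goes through verbatim. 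A secondary point: the compactness lemma gives an $\ve$--$\de$ approximation, not an estimate linear in the datum, so the linear recursion for $\sigma_j$ should be replaced by directly exhibiting the majorizing modulus (here $\om_4$, the discrete convolution \eqref{eq4.27} together with the geometric tail), which plays the role of the solved recursion.
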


It turns out that the key to the nonlinear theory as observed in \cite{MR3004772} is to consider the following  modified  $L^{q}$ version of the classical Riesz potential:
\begin{equation}\label{rz}
{\bf \tilde  I}^{f}_{q}(x, r)= \int_{0}^{r} \left( \fint_{B_{\rho}(x)} |f(y)|^q dy \right)^{1/q} d\rho,
\end{equation}
 and then getting gradient $L^{\infty}$ as well as moduli of continuity  estimates  in terms of this modified Riesz potential, which is analogous to  the classical linear theory where similar estimates are known in terms of the truncated Riesz potential. In the context of fully nonlinear elliptic equations as in Theorem \ref{dkm} above,  the authors show that the following estimate holds
\begin{equation}\label{md}
|\nabla u(x_1) - \nabla u(x_2)| \leq c\|Du\|_{L^{\infty}(\Om)} |x_1-x_2|^{\alpha(1-\delta)} + c {\bf \tilde  I}^{f}_{q}(x, 4 |x_1-x_2|^{\delta})
\end{equation}
where $\alpha, \delta$ depends on $n$, $q$ and the ellipticity constants of $F$.

Estimate \eqref{md}  from  \cite{MR3169795} is obtained by a  delicate combination of   $W^{1,q}$  estimates for fully nonlinear equations established in \cite{MR1606359} with a certain modified  Morrey-Campanato type argument. Over here, the reader should note that the success of such a small perturbation type argument relies crucially on  intricate scaling properties of the equation  in Theorem \ref{dkm} (which for $f \in L(n,1)$ is  scaling "critical"  as the reader will observe in our work later on) and also on the fact that at small enough scales, such an equation  can be regarded as  a small perturbation of 
\[
F(D^2u)=0,
\]
for which apriori $C^{1, \alpha}$ estimates  are known. (see for instance \cite{MR1005611}).   It turns out that if $f \in L(n ,1)$, then
\begin{equation}\label{convergence}
{\bf \tilde  I}^{f}_{q}(x, r) \to 0\txt{as} r \to 0,
\end{equation}
whenever $q < n$, which combined with the estimate \eqref{md} gives that $\nabla u$ is continuous. 

\medskip

These recent results have provided us with  a natural motivation to investigate  the validity of  similar gradient  continuity estimates upto the boundary for solutions to  \eqref{fl}  in the borderline situation as in Theorem \ref{dkm} (i.e., with $f \in L(n,1)$) and with minimal regularity assumptions on the boundary and the boundary datum.   Our main result (see Theorem \ref{main_thm}) can be thought of as the boundary analogue of Theorem \ref{dkm} which was established in \cite{MR3169795}.  More precisely, in the boundary situation as in \eqref{main}, we show that if $\Om$ is $C^{1, \dini}$ and $g$ is $C^{1, \dini}$, then $Du$ is continuous upto $\pa \Om$  with a modulus of continuity similar to that  in Theorem \ref{dkm}, in particular an estimate of the form \eqref{md} holds upto the boundary. Note that standard results on gradient continuity of solutions  requires $\pa \Om \in C^{1,\dini}$ (see for instance \cite{MR818099}) and $C^{1}$ regularity     is not true in general for $C^{1}$ domains where the solution may even fail to be Lipschitz upto the boundary (see for instance \cite{MR0437947}). Therefore  in that sense, our regularity assumptions on Dirichlet boundary conditions are in a sense, optimal. 

\medskip

The reader should note that  in order to obtain an estimate similar to  \eqref{md}  in our situation from which gradient continuity follows thanks to the convergence in  \eqref{convergence}, we follow an approach which  is somewhat  different from the approach used in \cite{MR3169795} and therefore our work gives a slightly different  viewpoint in the interior case  as well.  Our method  is based on the adaptation of  compactness arguments  and  is independent of the  $W^{1,q}$ estimates which is crucially used in \cite{MR3169795} in order to establish \eqref{md}. We note that such geometric compactness arguments  have their roots in the seminal work of Caffarelli (see \cite{MR1005611}) and   is based on pointwise  affine approximation  of the solution at dyadic scales which is achieved  in our situation by suitable rescalings that are partially inspired by those used in \cite{MR3169795} and   by  appropriately comparing  our boundary value problem with a  relatively smooth Dirichlet problem.  We would like to mention as well that although our work is  inspired   by some of the earlier works mentioned above, it has nonetheless required some delicate  adaptations  in our setting which is complicated by the presence of the Dirichlet condition.  For instance,  in order to ensure that  our compactness lemma  in Section \ref{section4} can be applied, we have to additionally ensure smallness of the boundary datum at each step of iteration.  The reader can see from the analysis involved in the proof of Lemma \ref{lemma4.7}  that this requires some subtle work  in our Dirichlet situation because of additional  moduli of continuities involved unlike the interior case. We also note that  unlike what is conventionally done in the divergence form theory,   the boundary cannot be flattened in our situation to begin with    because of the lower regularity assumption on $\Om$ and the fact that our equation has  non-divergence structure.  In that sense, our techniques are also partially inspired by that in  the  recent paper  \cite{arXiv:1804.06697}, which is on boundary Schauder estimates on Carnot groups where the boundary cannot be flattened either. 
\medskip

Finally, we   describe  a related  boundary regularity   result that has been previously obtained by  Ma-Wang in their interesting paper \cite{ MR2853528}. In \cite{MR2853528}, the authors establish  the gradient continuity of solutions to \eqref{main} upto the boundary of a $C^{1,\dini}$ domain $\Om$ under the assumption that

\begin{equation}\label{cv}
\int_{0}^{r} \left( \fint_{B_{\rho}(x)} |f(y)|^n dy \right)^{1/n} d\rho =:{\bf \tilde  I}^{f}_{n}(x, r)\to 0\ \txt{as} r \to 0,
\end{equation}
 or equivalently under the assumption that the convergence in \eqref{convergence} holds for $q=n$. Now  for an arbitrary  function $f \in L(n,1)$, the convergence in \eqref{convergence} doesn't hold  when $q=n$ and hence the  result from \cite{MR2853528}  doesn't cover our regularity result. Therefore our main result  is a  true  sharpening of the result in \cite{ MR2853528}. We note that the method in \cite{ MR2853528}, which in turn is inspired by some of the the fundamental works of Wang in \cite{ MR1139064} is quite different from  ours and  makes clever use of  barriers,  Alexandroff-Bakelman-Pucci type  maximum principle  and $\dini$-continuity of the normal at the boundary  using which the authors obtain  appropriate estimates at each  iterative step (see the proof of \cite[Lemma 3.1]{ MR2853528}).  Because of  the use of Alexandroff-Bakelman-Pucci maximum principle, their  estimate  relies crucially  on the $L^{n}$ norm  of $f$ at each step and that is precisely why  their gradient continuity estimate     depend in an essential way on the   convergence of the quantity  as  in \eqref{cv}  which involves the $L^{n}$ norm of $f$ at each scale. This heuristics shows that   the approach in \cite{ MR2853528}  cannot  be modified to prove our borderline  regularity result at the boundary.

 The paper is organized as follows. In section \ref{prelim}, we introduce certain relevant   notions and gather some known results. In section \ref{section3},  we state our main result. In Section \ref{section4},  we first establish a basic compactness Lemma and then consequently establish uniform affine approximation of the solution at the boundary at dyadic scales (see Lemma \ref{lemma4.7})  and finally in Section \ref{section5}, we prove  main Theorem \ref{main_thm}.

\section{Preliminaries}
\label{prelim}
In this section, we shall collect all the preliminary material that will be used in the subsequent sections. 
\subsection{Fully Nonlinear equations}
In this subsection, let us recall some well known definitions and properties of Fully nonlinear equations. This  subsection is taken from \cite{MR1351007} (see also \cite{MR2486925}).
\begin{definition}
\label{def_unif_ell}
Let $M(n)$ be the set of all symmetric $n\times n$ matrices equipped with the order $M \leq N$ iff $M-N$ is positive semi-definite. Any function $F: \RR^n \times M(n)  \longmapsto \RR$ is said to be uniformly elliptic if there exists constants $0< \La_0 < \La_1 < \infty$ such that for almost every $x \in \RR^n$, the following holds:
\begin{equation*}
\pp^-(M-N) \leq F(x,N) - F(x,N) \leq \pp^+(M-N),
\end{equation*}
where $\pp^-$ and $\pp^+$ are the standard Pucci's extremal operators defined as
\begin{equation*}
\pp^-(M):= \La_0 \sum_{\mu_j>0} \mu_j + \La_1 \sum_{\mu_j<0} \mu_j, \qquad  \pp^+(M):= \La_1 \sum_{\mu_j>0} \mu_j + \La_0 \sum_{\mu_j<0} \mu_j,
\end{equation*}
where $\{\mu_j\}_{j=1}^{n}$ are the eigenvalues of $M$.
\end{definition}

\begin{definition}
\label{def_visc}
Let $F(x,M)$ be continuous in $M$ and measurable in $x$ and we assume $f \in L^q_{\loc}$ for some $q > \frac{n}{2}$. A continuous function $u$ is an $W^{2,q}$-\emph{viscosity subsolution}(\emph{supersolution}) of \eqref{main} if for all $\phi \in W^{2,q}(B_r(x_0))$ with $B_r(x_0) \subset \Om$ and any $\ve >0$ satisfying the bound 
\begin{gather*}
\phantom{(}F(x,D^2\phi(x)) \leq f(x) - \ve\phantom{)}  \qquad \text{almost everywhere in } \ B_r(x_0) \\
(F(x,D^2\phi(x)) \geq f(x) + \ve) \qquad \text{almost everywhere in } \ B_r(x_0),
\end{gather*}
implies $u-\phi$ cannot attain a local maximum (minimum) at $x_0$. The function $u$ is called  $W^{2,q}$-\emph{viscosity solution} if $u$ is both a subsolution and supersolution. 
\end{definition}

\subsection{Modulus of continuity}
In this subsection, we shall recall some of the properties of modulus of continuity functions. 
\begin{definition}\label{def_modulus}
A function $\Psi(t)$ for $0 \leq t \leq S_0$ is said to be a modulus of continuity if the following properties are satisfied:
\begin{itemize}
\item $\Psi(t) \rightarrow 0$ as $t \searrow 0$.
\item $\Psi(t)$ is positive and increasing as a function of $t$.
\item $\Psi(t)$ is sub-additive, i.e., $\Psi(t_1 + t_2) \leq \Psi(t_1) + \Psi(t_2).$
\item $\Psi(t)$ is continuous.
\end{itemize}
\end{definition}

We now define the notion of $\dini$-continuity:
\begin{definition}
\label{def_dini}
 Let  $f : \RR^n \rightarrow \RR$ be a function and define the following modulus of continuity:
 \begin{equation*}
 \om_f(t):= \sup_{|x-y| \leq t} |f(x) - f(y)|.
 \end{equation*}
 We then say $f$ is  $\dini$-continuous if 
 \begin{equation}\label{dini_form}
 \int_0^1 \frac{\om_f(t)}{t} \ dt < \infty.
 \end{equation}
\end{definition}

From \cite[Page 44]{MR0213785}, we see that any continuous, increasing function $\Psi(t)$ on the interval $[0,S_0]$ which satisfies $\Psi(0)=0$ is a modulus of continuity if it is concave. From this, we have the following important result proved in \cite[Theorem 8]{MR0213785}:
\begin{theorem}
\label{mod_concave}
For each modulus of continuity $\Psi(t)$ on  $[0,S_0]$, there is a concave modulus of continuity  $\tilde{\Psi}(t)$ with the property
\[
\Psi(t) \leq \tilde{\Psi}(t) \leq 2 \Psi(t) \txt{for all} t \in [0,S_0].
\]
\end{theorem}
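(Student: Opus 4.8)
The standard strategy is to take $\tilde\Psi$ to be the \emph{least concave majorant} of $\Psi$ on $[0,S_0]$, the only real content being the upper bound $\tilde\Psi\le 2\Psi$, which is where subadditivity enters. I would define $\tilde\Psi$ as the pointwise infimum of all concave functions on $[0,S_0]$ dominating $\Psi$ (a nonempty family, since the constant $\max_{[0,S_0]}\Psi$ belongs to it); as a pointwise infimum of concave functions it is itself concave, it is finite, and clearly $\tilde\Psi\ge\Psi$. It is classical that $\tilde\Psi$ then admits the chord description
\[
\tilde\Psi(t)=\sup\left\{\frac{t_2-t}{t_2-t_1}\,\Psi(t_1)+\frac{t-t_1}{t_2-t_1}\,\Psi(t_2)\ :\ 0\le t_1\le t\le t_2\le S_0,\ t_1<t_2\right\}
\]
for $t\in(0,S_0]$, while $\tilde\Psi(0)=\Psi(0)=0$ because at $t=0$ only $t_1=0$ is admissible.

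Next I would check that $\tilde\Psi$ is a modulus of continuity in the sense of Definition~\ref{def_modulus}. Positivity on $(0,S_0]$ and $\tilde\Psi(0)=0$ are already noted; $\tilde\Psi$ is nondecreasing because otherwise concavity would force it to be nonincreasing on a terminal interval $[c,S_0]$ where it is not constant, and replacing $\tilde\Psi$ there by the constant $\tilde\Psi(c)$ would still yield a concave majorant (as $\tilde\Psi(c)\ge\tilde\Psi(S_0)\ge\Psi(S_0)\ge\Psi(s)$ for $s\ge c$), contradicting minimality; continuity holds on $(0,S_0)$ for any finite concave function, at $S_0$ because $\tilde\Psi$ is concave and nondecreasing with $\tilde\Psi(S_0)=\Psi(S_0)$, and at $0$ because, once the bound below is available, $0\le\tilde\Psi(t)\le 2\Psi(t)\to 0$. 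Concavity being built in, the criterion recalled above from \cite{MR0213785} then identifies $\tilde\Psi$ as a modulus of continuity, and $\Psi\le\tilde\Psi$ is immediate.

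The heart of the matter is $\tilde\Psi(t)\le 2\Psi(t)$. Fix $t$ and a chord as above and set $\alpha=\tfrac{t_2-t}{t_2-t_1}\in[0,1]$, $h=t_2-t_1>0$, so that $t_2-t=\alpha h$ and $t=t_1+(1-\alpha)h\ge(1-\alpha)h$; the chord value equals $\alpha\Psi(t_1)+(1-\alpha)\Psi(t_2)$. Since $\Psi$ is nondecreasing, $\Psi(t_1)\le\Psi(t)$, and by subadditivity $\Psi(t_2)=\Psi\bigl(t+\alpha h\bigr)\le\Psi(t)+\Psi(\alpha h)$, so the chord value is at most $\Psi(t)+(1-\alpha)\Psi(\alpha h)$. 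Because $\Psi$ is nondecreasing and $t\ge(1-\alpha)h$, it therefore suffices to show $(1-\alpha)\Psi(\alpha h)\le\Psi\bigl((1-\alpha)h\bigr)$, the cases $\alpha\in\{0,1\}$ being trivial. For $0<\alpha\le\tfrac12$ this is clear, since $\alpha h\le(1-\alpha)h$ and $1-\alpha\le1$. For $\tfrac12<\alpha<1$ put $k=\lceil\alpha/(1-\alpha)\rceil\ge1$; then $k\le 1/(1-\alpha)$, so $k(1-\alpha)h\le h\le S_0$ and $\alpha h\le k(1-\alpha)h$, and the iterated subadditivity bound $\Psi(ks)\le k\Psi(s)$ (proved by induction and applicable since the arguments $s,2s,\dots,ks$ with $s=(1-\alpha)h$ all lie in $[0,S_0]$) gives $\Psi(\alpha h)\le k\Psi\bigl((1-\alpha)h\bigr)$, whence $(1-\alpha)\Psi(\alpha h)\le(1-\alpha)k\,\Psi\bigl((1-\alpha)h\bigr)\le\Psi\bigl((1-\alpha)h\bigr)$. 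Taking the supremum over chords yields $\tilde\Psi(t)\le 2\Psi(t)$.

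I expect the main obstacle to be precisely this last estimate: the constant $2$ must be extracted from subadditivity alone, via the elementary inequality $(1-\alpha)\lceil\alpha/(1-\alpha)\rceil\le 1$, and one has to be careful that every argument occurring in the chain $\Psi(ks)\le k\Psi(s)$ stays inside $[0,S_0]$, the only interval on which $\Psi$ is defined. The remaining ingredients — the classical identification of the least concave majorant with the chord formula, and its monotonicity and endpoint continuity — are routine but worth stating carefully, since $\Psi$ lives on a compact interval.
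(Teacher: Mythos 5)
Your argument is correct in its essential content and follows the classical route (least concave majorant plus subadditivity) underlying \cite[Theorem 8]{MR0213785}, which the paper cites without reproducing a proof; in particular the factor-$2$ estimate is sound, and you were right to worry about keeping all arguments of $\Psi$ inside $[0,S_0]$ --- your bound $k(1-\alpha)h\le h\le S_0$ handles exactly the point where the naive iteration of subadditivity would leave the domain. The one step that does not work as written is the monotonicity of $\tilde\Psi$: if $\tilde\Psi$ attains its maximum at some $c<S_0$ and then decreases, replacing it on $[c,S_0]$ by the constant $\tilde\Psi(c)$ produces a \emph{larger} concave majorant, which contradicts nothing. The fix is to truncate from above rather than lift from below: the function $\min\bigl(\tilde\Psi,\tilde\Psi(S_0)\bigr)=\inf_{s\in[t,S_0]}\tilde\Psi(s)$ is concave (a minimum of concave functions is concave), is still a majorant of $\Psi$ because $\Psi(t)\le\Psi(S_0)\le\tilde\Psi(S_0)$ by monotonicity of $\Psi$, and lies strictly below $\tilde\Psi$ somewhere whenever $\tilde\Psi$ fails to be nondecreasing; minimality then forces $\tilde\Psi$ to coincide with this nondecreasing function. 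Alternatively, monotonicity follows directly from your chord formula: since $\Psi(t_1)\le\Psi(t_2)$ each chord is nondecreasing in $t$ on $[t_1,t_2]$, and for $t'>t_2$ one has $\tilde\Psi(t')\ge\Psi(t')\ge\Psi(t_2)\ge{}$the chord value at $t$. With that repair the proof is complete.
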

We will also need the following definition which captures a certain monotonicity property of the modulus of continuity.
\begin{definition}
\label{eta_decreasing}
Given $\eta \in (0,1]$, we say that a modulus $\Psi$ is $\eta$-decreasing if the following holds:
\begin{equation*}
\frac{\Psi(t_1)}{t_1^{\eta}} \geq \frac{\Psi(t_2)}{t_2^{\eta}} \txt{for all} t_1 \leq t_2.
\end{equation*}

\end{definition}

\begin{remark}
\label{remark2.7}
From \cite[Page 44]{MR0213785}, we see that any continuous, increasing function $\Psi$ on an interval $[0,S_0]$ with $\Psi(0)=0$ is a modulus of continuity if it is concave. More generally, it suffices to assume  that $\frac{\Psi(x)}{x}$ is decreasing instead of concavity for $\Psi$. 

\end{remark}

\subsection{Geometric structure}
Let us now make clear the geometric assumptions imposed on the boundary of the domain $\Om$  and on the nonlinearity.

\begin{definition}
\label{dini_domain}
We say  $\Om$ is  $C^{1, \dini}$ domain if  after translation,  rotation  and scaling, we may assume that $0 \in \pa \Om$ and  $\Om \cap B_r$ for any $r \in (0,R_0]$ is given by
\begin{equation*}
\Om \cap B_r := \left\{ (x',x_n): x_n > \Ga(x') \right\},
\end{equation*}
where $\Ga \in C^{1,\dini}$ function and $\Ga(0)=0, \nabla_{x'} \Ga(0)=0$.  In particular, $\nabla \Ga$ has $\dini$ modulus of continuity in the sense of Definition \ref{def_dini}.

\end{definition}

\begin{definition}
\label{BMO-def}
Let $F: \RR^n \times M(n)  \mapsto \RR^n$ be continuous in $x$ and define
\begin{equation*}
\Theta_F(x,y) = \Theta(x,y):= \sup_{M \in S(n) \setminus \{0\}} \frac{|F(x,M) - F(y,M)|}{\|M\|}.
\end{equation*}

We say $F$ is $\Theta_0$-BMO in $\Om$ for some $\Theta_0>0$,  if the following holds: 
\begin{equation*}
\lbr \fint_{B_r \cap \Om} \Theta(x_0,x)^n \ dx \rbr^{\frac1n} \leq \Theta_0 \qquad \text{for all } \ x_0 \in \Om \ \text{and} \ r>0.
\end{equation*}

\end{definition}

\subsection{Extension Lemma}
In this subsection, let us recall a standard extension Lemma proved in \cite[Theorem 2.2]{MR0350177} that will be used throughout the paper. For the sake of completeness, we include its proof.

\begin{lemma}\label{ext_lemma}
Let $k_0 \in \NN$ be a fixed integer and let $\Om$ be a $C^{k_0,\alpha}$ domain for some $\alpha>0$,  $f \in C^{k_0,\alpha}(\Om \cap B_1(x_0))$ be a function for some fixed $x_0 \in \pa \Om$. There exists a $C^{k_0,\alpha}$ function $\tf$ defined on $B_1(x_0)$ such that $\tf(x)=f(x)$ whenever $x \in \Om \cap B_1(x_0)$ and 
\[
\| \tf\|_{C^{k_0, \alpha}(B_1(x_0))} \leq C \|f\|_{C^{k_0, \alpha}(\Om \cap B_1(x_0))}.
\]

\end{lemma}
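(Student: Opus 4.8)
The plan is to prove the statement by the classical reflection-and-partition-of-unity technique, reducing everything to the model case of a half-space and then piecing together local extensions via a finite cover of $\pa\Om$. First I would localize: since $\Om$ is a $C^{k_0,\alpha}$ domain, around the fixed point $x_0 \in \pa\Om$ there is, after translation and rotation, a ball $B_1(x_0)$ and a $C^{k_0,\alpha}$ graph function $\Ga$ so that $\Om \cap B_1(x_0)$ is the region lying above the graph of $\Ga$ in suitable coordinates (cf.\ Definition \ref{dini_domain}, here with the stronger $C^{k_0,\alpha}$ regularity in place of $C^{1,\dini}$). The diffeomorphism $\Phi(x',x_n) = (x', x_n - \Ga(x'))$ is $C^{k_0,\alpha}$ with $C^{k_0,\alpha}$ inverse and flattens the boundary, mapping $\Om \cap B_1(x_0)$ into the upper half-space $\{y_n > 0\}$. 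So it suffices to extend a $C^{k_0,\alpha}$ function defined on (a neighborhood in) $\{y_n \geq 0\}$ to the full neighborhood, with control of the $C^{k_0,\alpha}$ norm, since composing back with $\Phi$ and $\Phi^{-1}$ only costs a constant depending on $\|\Ga\|_{C^{k_0,\alpha}}$, hence on the domain.

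Next I would carry out the half-space extension by higher-order Seeley/Hestenes reflection. Given $g \in C^{k_0,\alpha}$ on $\{y_n \geq 0\}$, define for $y_n < 0$
\[
\tilde g(y', y_n) := \sum_{j=1}^{k_0+1} c_j \, g(y', -\lambda_j y_n),
\]
where $\lambda_1, \dots, \lambda_{k_0+1}$ are distinct positive numbers and the coefficients $c_j$ are chosen so that $\sum_j c_j (-\lambda_j)^m = 1$ for $m = 0, 1, \dots, k_0$; this linear system is solvable because its matrix is a (transposed, scaled) Vandermonde matrix, hence invertible. With this choice, all partial derivatives up to order $k_0$ of $\tilde g$ match those of $g$ across $\{y_n = 0\}$, so the glued function is $C^{k_0}$, and each summand is $C^{k_0,\alpha}$ with the same Hölder seminorm up to the factor $\lambda_j^{k_0+\alpha}$, so the extension is $C^{k_0,\alpha}$ with norm bounded by $C(k_0) \|g\|_{C^{k_0,\alpha}}$. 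Multiplying by a fixed cutoff supported in $B_1(x_0)$ and equal to $1$ on a smaller ball kills the behavior near $\pa B_1(x_0)$ without affecting the values on $\Om \cap B_{1/2}(x_0)$; a final rescaling of the radius recovers the statement as written. This gives the extension in the localized coordinates, and transporting back by $\Phi^{-1}$ completes the argument, with the constant $C$ depending only on $n$, $k_0$, $\alpha$, and the $C^{k_0,\alpha}$ character of $\Om$ near $x_0$.

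The main obstacle — really the only point requiring care — is bookkeeping the norm bounds through the change of variables $\Phi$: one must check that the chain rule applied $k_0$ times, together with the Hölder estimate for compositions, produces a bound of the advertised form, i.e.\ that the constant genuinely depends only on $\|\Ga\|_{C^{k_0,\alpha}}$ and not on $f$. Since $\Phi$ and $\Phi^{-1}$ are fixed $C^{k_0,\alpha}$ maps once the domain is fixed, this is routine multilinear estimation, but it is where all the $\alpha$-dependence and domain-dependence of $C$ enters. (One should also note the cosmetic point that the reflection formula above requires $g$ to be defined on a one-sided neighborhood slightly larger than needed, i.e.\ on $\{y_n \geq 0\} \cap B_1$; shrinking the ball at the start, as in the localization step, takes care of this.)
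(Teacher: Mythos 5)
Your proposal is correct and follows essentially the same route as the paper: flatten the boundary with a $C^{k_0,\alpha}$ diffeomorphism, extend across $\{y_n=0\}$ by the Hestenes--Seeley reflection $\sum_j c_j\, g(y',-\lambda_j y_n)$ with the $c_j$ determined by a Vandermonde system (the paper takes $\lambda_j = 1/j$, citing Lions--Magenes), and pull back by the diffeomorphism. Your additional remarks on the cutoff near $\pa B_1(x_0)$ and on tracking the constant through the change of variables are points the paper glosses over, but they do not change the argument.
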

\begin{proof}
Let $\pa \Om \cap B_1(x_0)$ be parametrized by 
\begin{itemize}
\item $\phi(x_0)= 0 $, 
\item $\Om \cap B_1(x_0) = \left\{(x',x_n) \in B_1(x_0): \phi(x') > x_n\right\}$.
\end{itemize}
After translating and  flattening the boundary, there exists a  $C^{k_0, \alpha}$ diffeomorphism  such that
\begin{equation*}\label{diff}
\Phi: \Om \cap B_1(x_0) \mapsto B_1^+(0) \qquad \text{and} \qquad \Phi : \Om^c \cap B_1(x_0) \mapsto B_1^-(0).
\end{equation*}
Let us define the following extension function:
\[
v(x',x_n) = f \circ \Phi^{-1}(x',x_n) \txt{for all} (x',x_n) \in B_1^+(0).
\]
Since $f$ and $\Phi$ are $C^{k_0, \alpha}$ function, we must have $v \in C^{k_0, \alpha}(B_1^+(0))$.  We shall define the extension function for  $(x',x_n) \in B_1(0)$ by
\begin{equation}\label{ext}
V(x',x_n) = \left\{ \begin{array}{ll}
                      v(x',x_n) & \qquad \text{if} \ x_n \geq 0,\\
                      \sum_{i=1}^{k_0+1} c_i v\lbr x',- \frac{x_n}{i}\rbr & \qquad \text{if} \ x_n < 0,
                      \end{array}\right.
\end{equation}
where the constants $c_i$ are obtained by solving the linear system $\left\{\sum_{k=1}^{k_0+1} (-1)^j k^{-j} c_k =1\right\}_{0\leq j \leq k_0}$. From \cite[Theorem 2.2]{MR0350177}, we see that  $V \in C^{k_0, \alpha}(B_1(0))$. We now define the extension function $\tf$ by 
\[
\tf(x',x_n) = V \circ \Phi (x',x_n) \txt{for all} (x',x_n) \in B_1(x_0).
\]
It is easy to see that the extension function $\tf \in C^{k_0, \alpha}(\Om \cap B_1(x_0))$ and the following bound holds:
\[
\| \tf\|_{C^{k_0, \alpha}(B_1(x_0))} \leq C \|f\|_{C^{k_0}(\Om \cap B_1(x_0))}.
\]
This completes the proof of the Lemma.
\end{proof}

\section{Main Theorem}
\label{section3}

Let us now state the main theorem that we will prove:
\begin{theorem}
\label{main_thm}
Let $u$ be a $W^{1,q}$ viscosity solution for some $q > n-n_0$ to \eqref{main} in $\Om \cap B_1$. There exists an $\Theta \in (0,1)$ depending only on $\La_0,\La_1,n$ such that if $F$ has $\Theta$-BMO coefficients,   $f \in L(n,1)(\overline{\Om \cap B_1})$, $g$ is $C^{1,\dini}(\pa \Om)$ on a domain $\Om$ with $C^{1,\dini}$ boundary, then $\nabla u$ is continuous upto the boundary. 

In particular, for any two points $y,z \in \Om \cap B_{\frac14}$, there exists two universal constants $C_0$ and $C_1$ such that the following estimate holds: 
\begin{equation*}
|\nabla u(y) - \nabla u(z)| \leq C_0 K (C_1|y-z|).
\end{equation*}
where $K(\cdot)$ is as defined in \eqref{def_K_combined} and depends on $\dini$-modulus of $\pa \Om$,  the $\dini$-modulus of $g$ and the $L(n,1)$ character of $f$. 
\end{theorem}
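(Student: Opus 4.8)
The plan is to run a Caffarelli-type compactness/perturbation iteration at a fixed boundary point (say $0 \in \pa\Om$), showing that at each dyadic scale $r_k = \rho^k$ the solution $u$ is well approximated by an affine function $\ell_k(x) = a_k + b_k \cdot x$, and that the increments $|a_{k+1}-a_k|$ and $|b_{k+1}-b_k|$ are controlled by a summable quantity built from the $\dini$-moduli of $\pa\Om$ and $g$ together with the modified Riesz potential $\tilde{\bf I}^f_q(0,r_k)$. First I would reduce to a normalized configuration: by Definition \ref{dini_domain} flatten-at-the-point-only (not globally, since the non-divergence structure and the mere $C^{1,\dini}$ regularity forbid a global flattening — this is exactly the subtlety emphasized in the introduction), translate so $0 \in \pa\Om$ with $\Ga(0)=0$, $\nabla_{x'}\Ga(0)=0$, and use Lemma \ref{ext_lemma} to extend $g$ to a $C^{1,\dini}$ function on a full ball so that one may subtract off its first-order Taylor polynomial at $0$. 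After subtracting an affine function one arranges $\|u\|_{L^\infty(\Om\cap B_1)} \le 1$, $f$ small in the relevant potential sense, and the oscillation of the boundary datum (now $g$ minus its linearization) and of $\nabla\Ga$ small on $B_1$.

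Next I would prove the one-step improvement. Given $u$ on $\Om\cap B_1$ with $|u|\le 1$, $F$ having $\Theta$-BMO coefficients with $\Theta$ small, $\tilde{\bf I}^f_q(0,1)$ small, and the boundary datum together with $\nabla\Ga$ small on $\pa\Om\cap B_1$, the compactness Lemma of Section \ref{section4} (which I may invoke) produces — by comparing with a solution $h$ of a constant-coefficient homogeneous Dirichlet problem $F_0(D^2 h)=0$ in a half-ball-like domain with $h=0$ on the flat part, for which $C^{1,\al}$ boundary estimates hold (Caffarelli, \cite{MR1005611}) — an affine function $\ell$, vanishing at $0$ with controlled slope, such that $\sup_{\Om\cap B_\rho}|u-\ell| \le \rho^{1+\al}$ for a fixed $\rho=\rho(n,\La_0,\La_1)$, provided the smallness parameters are chosen small enough depending on $\rho$ and $\al$. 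The delicate point here, and the one I expect to be the main obstacle, is the bookkeeping in the rescaling step $u \mapsto u_\rho(x) = (u(\rho x) - \ell(\rho x))/\rho^{1+\al}$: one must verify that after rescaling the hypotheses of the compactness lemma are again met, which forces one to track not just $\tilde{\bf I}^f_q$ (whose behaviour under scaling is the ``critical'' one from \cite{MR3169795}) but also the induced boundary datum and the induced domain-defining function $\Ga_\rho$, and to show all three remain small. This is precisely the step that in the interior case is simpler and which in our Dirichlet setting requires additionally forcing smallness of the boundary datum at each iteration — cf. the role of Lemma \ref{lemma4.7}. The summability needed to close the induction comes from $\dini$-continuity of $\nabla\Ga$ and of $\nabla g$ via \eqref{dini_form}, and from \eqref{convergence} for the potential term since $q<n$ (here the hypothesis $q > n - n_0$ together with $q<n$ pins down the admissible range).

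Then I would assemble the pointwise gradient estimate. Iterating the one-step improvement gives affine functions $\ell_k$ with $\sup_{\Om\cap B_{r_k}}|u-\ell_k|\le r_k^{1+\al}$ and telescoping bounds $|b_{k+1}-b_k| \lesssim$ (a term from $\om_{\nabla\Ga}(r_k)$) $+$ (a term from $\om_{\nabla g}(r_k)$) $+\, \tilde{\bf I}^f_q(0, c\, r_k)$; summing over $k$ shows $b_k$ converges, its limit is $\nabla u(0)$, and the rate of convergence is governed by the function $K(\cdot)$ of \eqref{def_K_combined}. Doing this at two nearby boundary points, and interpolating with the interior estimate \eqref{md} of \cite{MR3169795} on balls of radius comparable to the distance to $\pa\Om$ (standard: for $y,z\in\Om\cap B_{1/4}$ either both are close to the same boundary point, handled by the boundary iteration, or one lies deep inside relative to $|y-z|$, handled by the interior result), yields
\[
|\nabla u(y)-\nabla u(z)| \le C_0\, K(C_1|y-z|),
\]
with $C_0,C_1$ universal and $K$ absorbing the $\dini$-modulus of $\pa\Om$, the $\dini$-modulus of $g$, and the $L(n,1)$ character of $f$ through \eqref{convergence}. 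Continuity of $\nabla u$ up to $\pa\Om$ is then immediate since $K(t)\to 0$ as $t\to 0$.
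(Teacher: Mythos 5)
Your overall architecture --- a boundary compactness/one-step improvement, an iteration at dyadic scales that re-verifies smallness of the rescaled source, boundary datum and domain, convergence of the slopes with rate $K$, and a final dichotomy matching the boundary estimate with the rescaled interior estimate of \cite{MR3169795} on balls of radius comparable to $\dist(\cdot,\pa\Om)$ --- is exactly the paper's (Corollary \ref{corollary5.2}, Lemmas \ref{lemma4.7}, \ref{lemma4.9}, \ref{lemma5.1}, Proposition \ref{prop5.1}). There is, however, one step that fails as written: the iterated decay $\sup_{\Om\cap B_{r_k}}|u-\ell_k|\le r_k^{1+\al}$ and the corresponding renormalization $v=(u(\rho x)-\ell(\rho x))/\rho^{1+\al}$. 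With that normalization the rescaled source at step $k$ has
\[
\lbr \int_{B_1}|\tf|^q\,dx\rbr^{1/q}\ \sim\ \la^{-k\al}\cdot \la^{k}\lbr\fint_{B_{\la^k}}|f|^q\,dx\rbr^{1/q},
\]
and for $f$ merely in $L(n,1)$ the quantity $\la^{k}\bigl(\fint_{B_{\la^k}}|f|^q\bigr)^{1/q}$ tends to zero with \emph{no rate}, so after division by $\la^{k\al}$ it need not stay small (or even bounded); likewise the rescaled boundary datum has Lipschitz seminorm $\sim \om_{\nabla g}(\la^k)/\la^{k\al}$, which is not small for merely Dini $g$. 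So the hypotheses of the compactness lemma cannot be restored at later steps, and the induction does not close. This is precisely the ``borderline'' obstruction: pure power decay is unavailable.

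The fix --- and the actual content of the paper's Lemma \ref{lemma4.7} --- is to normalize instead by $\la^k\om_4(\la^k)$, where $\om_4(\la^k)=\max\{\la^{k\al},\om_3(\la^k)\}$ and $\om_3$ is the Cauchy-product modulus $\frac1{\tde}\sum_{i\le k}\om_1(\la^{k-i})\om_2(\la^i)$ built from the Dini moduli of $\nabla\Ga,\nabla g$ and the scale-wise Riesz-potential pieces $\om_2(\la^i)$. Because $\om_4(\la^k)\ge \frac{1}{\tde}\om_2(\la^k)$ and $\om_4(\la^k)\ge\frac1\tde\om_1(\la^k)$, the rescaled data are then small at \emph{every} step, and the quasi-monotonicity $\la^{\al}\om_4(\la^k)\le\om_4(\la^{k+1})$ (Lemma \ref{lemma4.6}) converts the one-step gain $\la^{1+\al}$ of Corollary \ref{corollary5.2} into the propagated bound $|u-L_{k+1}|\le\la^{k+1}\om_4(\la^{k+1})$; summability of $\sum_k\om_4(\la^k)$ (Lemma \ref{lemma4.5}, using the discrete Young/Cauchy-product estimate and the $L(n,1)$ bound \eqref{eq4.344}) then yields the telescoping control on $|b_{k+1}-b_k|$ that you correctly state as the target. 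The rest of your plan (identification of $\lim b_k$ via nontangential interior estimates, the two-point dichotomy, and the $\al$-decreasing property of $K$ used to absorb $K(\de)\de^{-\al}|y-z|^{\al}$) matches the paper.
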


\begin{remark}
Note that by a standard covering argument, we conclude that $\nabla u$ is continuous in $\overline{\Om \cap B_r}$  for any $r<1$. 
\end{remark}


\section{Some useful Lemmas}
\label{section4}

Before we begin this section, let us fix an exponent $q \in (n-n_0,n)$ where $n_0$ (denoted by $\ve$ in \cite{MR1237053}) is a small universal constant as obtained in \cite{MR1237053} such that the Krylov-Safanov type H\"older estimate  holds  for $W^{2,q}$ viscosity solutions to 
\[
F(D^2u,x)=f\txt{with} f \in L^{q}.
\]
See also \cite{MR2486925} for the analogous estimate upto the boundary. 
 \begin{definition}\label{class_bnd}
 Let $(\La_0,\La_1)$ be two fixed constants. Let $\mathfrak{F}$ denote the set of all uniformly elliptic functions $\tF$ with elliptic constants $(\La_0,\La_1)$. Furthermore, denote $\mathfrak{U}$ to be the class of all viscosity solutions $v \in C^0(B_1^+)$ solving 
 \[
 \left\{\begin{array}{ll}
\tilde{F}(x,D^2 v ) = 0 & \text{in}\ B_1^+,\\
v = 0 & \text{on} \ B_1 \cap \{x_n=0\},\\
\|v\|_{L^{\infty}(B_1^+)} \leq 1,
\end{array}\right.
 \]
 in the sense of Definition \ref{def_visc}.
 \end{definition}
The following boundary regularity was proved in \cite[Theorem 1.1]{MR3246039}:
 \begin{proposition}
 \label{bnd_reg}
 There exists an $\be = \be(n,\La_0,\La_1)\in (0,1)$ such that any solution $v \in \mathfrak{U}$ has the improved regularity $v \in C^{1,\be}(\overline{B_{\frac12}^+})$.
 \end{proposition}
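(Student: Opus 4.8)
The plan is to prove Proposition \ref{bnd_reg} by the standard Caffarelli-type compactness-and-iteration scheme, producing at each dyadic scale an affine approximation vanishing on the flat boundary. Since the hyperplane $\{x_n = 0\}$ is invariant under the natural rescalings, the approximating affine functions will be of the form $\ell(x) = a x_n$ for a scalar $a$, so ``affine approximation'' here really means linear approximation in the single variable $x_n$. The upshot is the decay estimate
\begin{equation*}
\sup_{B_\rho^+} |v(x) - a_\rho x_n| \leq \rho^{1+\be}
\end{equation*}
for a sequence of scales $\rho = \rho_0^k$ and a Cauchy sequence of slopes $a_\rho$, from which $C^{1,\be}$ regularity at the boundary point $0$ follows; translating along $\{x_n = 0\}$ and rescaling then gives the estimate on all of $\overline{B_{1/2}^+}$.

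The core is an \emph{approximation lemma}: for every $\ve > 0$ there is $\de > 0$ (depending only on $n, \La_0, \La_1$) such that if $v \in \mathfrak{U}$ and, in addition, $v$ solves $\tilde F(x, D^2 v) = 0$ with $\tilde F$ merely assumed to have some fixed ellipticity (no smallness needed since the right-hand side is already $0$ here — this is why the present proposition is cleaner than the full Dirichlet problem in the paper), then there is a solution $w$ of a \emph{constant-coefficient} equation $\tilde F_0(D^2 w) = 0$ in $B_{3/4}^+$ with $w = 0$ on $\{x_n = 0\}$ such that $\|v - w\|_{L^\infty(B_{3/4}^+)} \leq \ve$. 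This is proved by contradiction: if it failed, one would have a sequence $v_j \in \mathfrak{U}$ with operators $\tilde F_j$ staying uniformly elliptic; by the interior and boundary Krylov–Safonov / Hölder estimates for $W^{2,q}$-viscosity solutions cited just before Definition \ref{class_bnd} (interior estimates from \cite{MR1237053} and boundary estimates from \cite{MR2486925}), the $v_j$ are equi-Hölder-continuous up to $\{x_n=0\}$ and hence, along a subsequence, converge locally uniformly to some $v_\infty$; the operators $\tilde F_j$ converge (after passing to a further subsequence, using that $M \mapsto \tilde F_j(M)$ are uniformly elliptic hence uniformly Lipschitz) to a limiting uniformly elliptic operator $\tilde F_\infty$, and by stability of viscosity solutions $v_\infty$ solves $\tilde F_\infty(D^2 v_\infty) = 0$ in $B_1^+$ with $v_\infty = 0$ on $\{x_n = 0\}$ and $\|v_\infty\|_{L^\infty} \leq 1$. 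Taking $w = v_\infty$ contradicts the negation of the claim.

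From the approximation lemma one runs the iteration. Fix $\be \in (0,1)$ \emph{strictly smaller} than the Hölder exponent $\bar\be$ known for the flat-boundary constant-coefficient problem $\tilde F_0(D^2 w)=0$, $w=0$ on $\{x_n=0\}$ (such an estimate is classical — Krylov's boundary $C^{1,\bar\be}$ estimate, or can be quoted from \cite{MR1005611}); this gives that $w$ above satisfies $\sup_{B_r^+}|w - w_{x_n}(0)\,x_n| \leq C r^{1+\bar\be}$. Choose $\rho_0 \in (0, 1/4)$ so that $C\rho_0^{\bar\be - \be} \leq \tfrac12 \rho_0$, then fix $\ve$ (hence $\de$) so that the $\ve$-error is absorbed at scale $\rho_0^{1+\be}$. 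The inductive claim is: there exist slopes $a_k$ with $|a_{k+1} - a_k| \leq C\rho_0^{k\be}$ and $\sup_{B_{\rho_0^k}^+} |v - a_k x_n| \leq \rho_0^{k(1+\be)}$. The inductive step: set $v_k(x) = \rho_0^{-k(1+\be)}\big(v(\rho_0^k x) - a_k \rho_0^k x_n\big)$; check $v_k \in \mathfrak{U}$ (it vanishes on $\{x_n=0\}$, is bounded by $1$ by the inductive hypothesis, and solves a rescaled equation with an operator of the same ellipticity — the subtracted linear term $a_k x_n$ has zero Hessian so the equation is genuinely preserved); apply the approximation lemma to get $w$, then the constant-coefficient boundary estimate to extract its slope $b$; set $a_{k+1} = a_k + \rho_0^{k\be} b$ and verify the next-scale bound by the triangle inequality $|v_k - b x_n| \leq |v_k - w| + |w - b x_n| \leq \ve + C\rho_0^{1+\bar\be} \leq \rho_0^{1+\be}$ on $B_{\rho_0}^+$. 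Summing the geometric series $|a_{k+1} - a_k| \leq C\rho_0^{k\be}$ shows $a_k \to a_\infty$ with $|a_k - a_\infty| \lesssim \rho_0^{k\be}$, and combining with the scale-$\rho_0^k$ bound yields $|v(x) - a_\infty x_n| \leq C|x|^{1+\be}$ for $x \in B_{\rho_0}^+$, i.e. $v$ is $C^{1,\be}$ at $0$ with $\nabla v(0) = a_\infty e_n$. Finally, the same argument applied at every boundary point $x_0 \in B_{1/2} \cap \{x_n=0\}$ (after the translation and rescaling built into Definition \ref{class_bnd}), together with the interior $C^{1,\bar\be}$ estimate \cite{MR1005611} for points away from the boundary and an interpolation between interior and boundary balls, upgrades this to the uniform estimate $\|v\|_{C^{1,\be}(\overline{B_{1/2}^+})} \leq C(n,\La_0,\La_1)$.

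The main obstacle is the approximation lemma — specifically, ensuring that the compactness argument closes up: one needs genuine compactness of the family $\mathfrak{U}$ up to the flat boundary, which rests on having a \emph{boundary} Hölder estimate for $W^{2,q}$-viscosity solutions vanishing on $\{x_n=0\}$ uniform over the whole class $\mathfrak{F}$ of uniformly elliptic operators, and on the stability of the (boundary) viscosity-solution property under uniform convergence of both the solutions and the operators. Both are available in the cited literature (\cite{MR1237053}, \cite{MR2486925} for the estimates; the usual half-relaxed-limits argument for stability), so the remaining work is bookkeeping: checking that subtracting $a_k x_n$ preserves membership in a class with the \emph{same} ellipticity constants (immediate, since its Hessian vanishes) and that the rescaling $v \mapsto \rho_0^{-k(1+\be)} v(\rho_0^k\,\cdot)$ keeps us inside $\mathfrak{U}$ with the normalization $\|v_k\|_{L^\infty(B_1^+)} \leq 1$.
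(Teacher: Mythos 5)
First, a point of comparison: the paper does not prove Proposition \ref{bnd_reg} at all --- it is quoted from Silvestre--Sirakov \cite[Theorem 1.1]{MR3246039}, whose proof runs through a Krylov-type oscillation decay for $v/x_n$ (interior Harnack inequality plus boundary barriers) and deliberately avoids compactness. You attempt the Caffarelli compactness-and-iteration scheme instead, and it has a genuine gap exactly where the two approaches diverge. The class $\mathfrak{F}$ in Definition \ref{class_bnd} consists of \emph{all} uniformly elliptic $\tilde F(x,M)$, with no continuity or smallness of oscillation in $x$; your approximation lemma nevertheless asserts that the compactness limit solves a \emph{constant-coefficient} equation $\tilde F_0(D^2w)=0$. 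The subsequence extraction you describe (uniform Lipschitz continuity of $M\mapsto \tilde F_j(\cdot,M)$, Arzel\`a--Ascoli) only handles the $M$-variable and says nothing about the $x$-dependence. Take every $\tilde F_j$ equal to one fixed $\mathrm{tr}(A(x)M)$ with $A$ measurable, uniformly elliptic and genuinely discontinuous: the $v_j$ converge to a solution of the same rough variable-coefficient equation, no constant-coefficient $w$ is produced, and no contradiction arises. The smallness that drives such compactness arguments is smallness of the $x$-oscillation of $F$ --- the $\Theta$-BMO condition used everywhere else in the paper (see \ref{conv5} in the proof of Lemma \ref{lemma4.3}) --- not smallness of the right-hand side, which is what your parenthetical remark dismisses.

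Even restricting to $x$-independent operators, the scheme is circular: there the approximation lemma is vacuous ($w=v$ works), and the input needed to run the iteration --- the $C^{1,\bar\be}$ estimate for $\tilde F_0(D^2w)=0$ in $B_1^+$ with $w=0$ on $\{x_n=0\}$ --- \emph{is} the proposition. That estimate is not in \cite{MR1005611}, which is interior-only (odd reflection is unavailable for fully nonlinear $F$); the correct source is Krylov's boundary estimate, which gives H\"older continuity of $w/x_n$ up to $\{x_n=0\}$ for functions in the Pucci class $\pp^-(D^2w)\le 0\le \pp^+(D^2w)$ vanishing on the flat boundary. Once that is invoked, it applies \emph{directly} to $v$ itself, since membership in the Pucci class is insensitive to the $x$-dependence of $\tilde F$, and it yields the pointwise boundary $C^{1,\be}$ expansion with no compactness or iteration --- which is precisely the argument of \cite{MR3246039}. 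A final caveat on the statement itself: for merely measurable $x$-dependence one obtains the $C^{1,\be}$ expansion at boundary points together with interior $C^{0,\al}$ only; the full norm $\|v\|_{C^{1,\be}(\overline{B_{1/2}^+})}$ also requires interior $C^{1,\be}$, which is available in the paper's actual application because Proposition \ref{bnd_reg} is only ever applied to the constant-coefficient limit equation \eqref{2.7}.
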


 Let us now fix a constant $\al$ (see proof of Proposition \ref{prop5.1}) such that
 \begin{equation}
 \label{al_alt}
0< \al < \be. 
 \end{equation}

\subsection{Compactness Lemma}
We now state our first relevant compactness lemma at the boundary. 
\begin{lemma}
\label{lemma4.3}
Let $u$ be the viscosity solution of \eqref{main} $\Omega \cap B_1$  with $\| u\|_{L^{\infty}(\Om \cap B_1)} \leq 1$. Furthermore, suppose that $\Om$  is $C^{1,\dini}$  and furthermore assume that $\Om$  can be parametrized  as in the set  up of Definition \ref{dini_domain} with $R_0=1$  and  with $f \in L^q(\Om)$ and $g \in C^{1,\dini}(\pa \Om)$. Consider now the local problem
\begin{equation*}
\left\{
\begin{array}{ll}
F(x,D^2 u) = f & \text{in} \ \Om \cap B_1, \\
u = g & \text{on} \ \pa \Om \cap B_1,
\end{array}\right.
\end{equation*}
then given  any $\ve > 0$, there exists an $\de = \de(\La_0,\La_1,n,\ve,\dini)>0$ such that if 
\begin{gather}
\|f\|_{L^q(B_1\cap \Om)} \leq \de, \qquad \|\Ga\|_{C^{1,\dini}( B_1^{'})} \leq \de, \qquad \|g\|_{C^{0,1}(\pa \Om \cap B_1)} \leq \de, \qquad \|\Theta_F\|_{BMO} \leq \de,\label{eq4.3}
\end{gather}
then there exists a function $h \in C^{1, \be}(\overline{B_{1/2}})$ with $\be$ as obtained in Proposition \ref{bnd_reg} such that
\begin{equation*}
\label{4.4}
|u(x) - h(x)| \leq \ve \qquad \text{in} \ \Om \cap B_{\frac12}.
\end{equation*}
\end{lemma}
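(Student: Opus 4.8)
The plan is to argue by contradiction, following the compactness scheme of Caffarelli (\cite{MR1005611}). Suppose the conclusion fails for some $\ve_0>0$. Then there exist uniformly elliptic operators $F_k$ with the fixed ellipticity constants $\La_0,\La_1$, functions $f_k$, boundary data $g_k$, domains $\Om_k$ parametrized by graphs $\Ga_k$ as in Definition \ref{dini_domain} with $R_0=1$, and $W^{2,q}$-viscosity solutions $u_k$ of the associated Dirichlet problem in $\Om_k\cap B_1$ with $\|u_k\|_{L^\infty(\Om_k\cap B_1)}\le 1$, such that
\[
\|f_k\|_{L^q(B_1\cap\Om_k)}+\|\Ga_k\|_{C^{1,\dini}(B_1')}+\|g_k\|_{C^{0,1}(\pa\Om_k\cap B_1)}+\|\Theta_{F_k}\|_{BMO}\le \tfrac1k,
\]
yet $\sup_{\Om_k\cap B_{1/2}}|u_k-h|>\ve_0$ for every $h\in C^{1,\be}(\overline{B_{1/2}})$. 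I will derive a contradiction by producing a limit function that lies in the class $\mathfrak{U}$ of Definition \ref{class_bnd}.

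Since $\|\Ga_k\|_{C^1}\le 1/k\to 0$, the graphs $\Ga_k$ converge to $0$ in $C^1(B_1')$, so $\Om_k\cap B_1\to B_1^+$ in Hausdorff distance and $\pa\Om_k\cap B_1$ flattens onto $B_1\cap\{x_n=0\}$. By the Krylov--Safonov Hölder estimate for $W^{2,q}$-viscosity solutions with $q\in(n-n_0,n)$ --- in the interior as in \cite{MR1237053} and up to the flattening portion of the boundary as in \cite{MR2486925} --- combined with $\|f_k\|_{L^q}\le 1/k$ and $\|g_k\|_{C^{0,1}}\le 1/k$, the $u_k$ are equicontinuous with a universal Hölder exponent $\ga=\ga(n,\La_0,\La_1)$ on compact subsets of $B_1$ relative to $\overline{\Om_k}$. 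A diagonal Arzelà--Ascoli argument then yields a subsequence (not relabeled) converging locally uniformly on $\overline{B_1^+}$ to some $u_\infty\in C^{0,\ga}_{\loc}(\overline{B_1^+})$ with $\|u_\infty\|_{L^\infty(B_1^+)}\le 1$; since $g_k\to 0$ uniformly and the boundaries flatten, $u_\infty=0$ on $B_1\cap\{x_n=0\}$.

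Passing to a further subsequence, the $F_k$ (equi-Lipschitz in the matrix variable) converge locally uniformly to a uniformly elliptic $F_\infty$ with the same constants, and the hypothesis $\|\Theta_{F_k}\|_{BMO}\to 0$ forces, via Fatou's lemma, $F_\infty$ to be independent of $x$. By the stability of $W^{2,q}$-viscosity solutions under uniform convergence of the solutions and locally uniform convergence of the operators, on domains converging to $B_1^+$ (see \cite{MR1351007}), and using $f_k\to 0$ in $L^q$, the limit $u_\infty$ is a viscosity solution of $F_\infty(D^2u_\infty)=0$ in $B_1^+$; hence $u_\infty\in\mathfrak{U}$. By Proposition \ref{bnd_reg}, $u_\infty\in C^{1,\be}(\overline{B_{1/2}^+})$, and because $u_\infty$ vanishes on the flat boundary, its odd reflection across $\{x_n=0\}$ is a function $h\in C^{1,\be}(\overline{B_{1/2}})$ with $h=u_\infty$ on $\overline{B_{1/2}^+}$. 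For $k$ large, the uniform convergence $u_k\to u_\infty$ on $\overline{\Om_k\cap B_{1/2}}$ --- valid up to the moving boundary because of the equicontinuity of the $u_k$ there, the bound $\|g_k\|_\infty\to 0$, and the continuity of $h$ with $h\equiv 0$ on $\{x_n=0\}$ --- gives $\sup_{\Om_k\cap B_{1/2}}|u_k-h|\le \ve_0$, contradicting the choice of $u_k$. This proves the lemma, with $\de=\de(\La_0,\La_1,n,\ve,\dini)$ emerging from the contradiction; the dependence on $\dini$ enters only through the rate at which $\Ga_k$ must flatten.

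I expect the main obstacle to be the boundary analysis with varying domains: one needs boundary Hölder estimates that are uniform in $k$ and stable as $\Ga_k$ flattens, and one must check carefully that $u_k\to u_\infty$ uniformly up to the (moving) boundary, so that the comparison with the fixed function $h$ defined on $\overline{B_{1/2}}$ is legitimate. The closedness of viscosity solutions under joint convergence of operators and domains, routine in the interior, likewise requires some care near the Dirichlet boundary.
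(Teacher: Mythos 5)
Your proposal is correct and follows essentially the same compactness-by-contradiction scheme as the paper: uniform interior and boundary H\"older estimates for the contradicting sequence, Arzel\`a--Ascoli, stability of viscosity solutions under convergence of operators, data and domains, and then the Silvestre--Sirakov $C^{1,\be}$ regularity of the limit from Proposition \ref{bnd_reg}. The only (harmless) variation is that you extend the limit $u_\infty$ to the full ball by odd reflection after establishing its regularity, whereas the paper extends each $u_k$ across the flattened boundary via the Lions--Magenes extension of Lemma \ref{ext_lemma} before passing to the limit; both correctly handle the delicate point you flag, namely uniform closeness to a fixed $h\in C^{1,\be}(\overline{B_{1/2}})$ on the moving domains $\Om_k\cap B_{1/2}$.
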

\begin{proof}
The proof is by contradiction and follows the strategy from \cite[Proposition 3.2]{MR2486925} (see also \cite[Lemma 2.3]{MR1606359}). Suppose the Lemma is false, then there exists an $\ve_0$ such that for any $k \in \NN$, there exists a function $f_k$, an operator $F_k$ with ellipticity constants $(\La_0,\La_1)$, boundary data $g_k$ and domains $\Om_k$ parametrized by $\Ga_k$ satisfying
\begin{equation}
\label{5.4}
 \|f_k\|_{L^q(\Om_k \cap B_1)} \leq \frac1k, \qquad \|\Ga_k\|_{C^{1,\dini}( B_1^{'})} \leq \frac1k, \qquad \|g_k\|_{C^{1}(\pa \Om_k \cap B_1)} \leq \frac1k, \qquad \|\Theta_{F_k}\|_{BMO} \leq \frac1k,
\end{equation}
and a corresponding local viscosity solution $u_k$ solving 
\begin{equation*}
\left\{
\begin{array}{ll}
F_k(x,D^2 u_k) = f_k & \text{in} \ \Om_k \cap B_1, \\
u_k = g_k & \text{on} \ \pa \Om_k \cap B_1,
\end{array}\right.
\end{equation*}
such that 
\begin{equation*}
\|u_k - \phi\|_{L^{\infty}(\Om_k\cap B_{1/2})} \geq \ve_0  \qquad  \text{for any} \ \phi \in C^{1,\tal}(B_1). 
\end{equation*}

Making use of uniform bounds in \eqref{5.4}, we can now use the H\"older estimate  upto the boundary  from \cite[Theorem 1.10]{MR2486925} to obtain
\begin{equation}
\label{eq4.8}
\begin{array}{rcl}
\|u_k\|_{C^{0,\alpha}(\Om_k \cap B_1)} &\leq & C(n,\La_0,\La_1,p) \lbr \|u_k\|_{L^{\infty}(\Om_k\cap B_1)} + \|g_k\|_{C^{0,1}(\pa\Om_k\cap B_1)} + \|f_k\|_{L^q(\Om_k\cap B_1)}\rbr\\
& \leq & C(n,\La_1,\La_1,q).
\end{array}
\end{equation}

We now use an idea similar to that in the proof of Lemma 4.1 in \cite{arXiv:1804.06697}. After flattening the boundary as in the proof of Lemma \ref{ext_lemma},  we extend $u_k$ to $B_1$ using \eqref{ext}  with $k_0=1$ and we still denote the extended function by  $u_k$. It is easy to see  that such an extension  ensures that $u_k$ is uniformly bounded in $C^{0,\al}(B_1)$.  As a consequence of the above estimates and hypothesis, we have the following convergence results:
\begin{enumerate}[(i)]
\item\label{conv1} Applying Arzela-Ascoli theorem to \eqref{eq4.8}, we see that there exists a function $u_{\infty} \in C^0(B_1)$ such that $u_k \rightarrow u_{\infty}$ uniformly in $B_1$. 
\item\label{conv2} From \eqref{5.4}, we see that $f_k \rightarrow 0$ as $k \rightarrow \infty$.
\item\label{conv3} From \eqref{5.4}, we see that $\Om_k \cap B_1 \rightarrow B_1^+$ as $k \rightarrow \infty$.
\item\label{conv4} From \eqref{5.4}, we see that $g_k \rightarrow 0$ as $k \rightarrow \infty$ which implies $u_{\infty} = 0$ on $B_1 \cap \{x_n = 0\}$. 
\item\label{conv5} Since $F_k$ is uniformly elliptic and $\|\Theta_{F_k}\|_{BMO} \rightarrow 0$, we see that $F_k(0,\cdot) \rightarrow F_{\infty}(\cdot)$ uniformly over compact subsets of $S(n)$.
\end{enumerate}

From the above convergence results along with an argument similar to \cite[Lemma 2.3]{MR1606359}, we get that $u_{\infty}$ is a $C^0$ viscosity solution of 
\begin{equation}\label{2.7}\left\{
\begin{array}{ll}
F_{\infty}(D^2 u_{\infty})=0 & \text{in}\ B_1^{+},\\
u_{\infty}=0\ & \text{on}\  x_n=0.
\end{array}\right.
\end{equation}

We can now make use of the estimate from Proposition \ref{bnd_reg} to get $u_{\infty}   \in C^{1,\beta}(\overline{B_{1/2}^+})$ for some $\beta \in (0,1)$. Now from the following expression of $u_\infty$ in $\{x_n<0\}$
\[  
u_{\infty}(x', x_n)=\sum_{i=1}^{2} c_i u_{\infty} ( x',- \frac{x_n}{i}),                   
 \]
 which follows from the uniform  convergence of extended $u_k$ to $u_\infty$, we conclude that

\begin{equation}\label{eq4.10}
\|u_{\infty}\|_{C^{1,\beta}(B_{1/2})} \leq C \|u_{\infty}\|_{C^{1,\beta}(\overline{B_{1/2}^+})}.
\end{equation}
Thus, from \ref{conv1} and \eqref{eq4.10}, we get $u_k \rightarrow u_{\infty}$ uniformly in $B_1$. In particular, this implies $$\|u_k - u_{\infty}\|_{L^{\infty}(B_1)} \rightarrow 0 \txt{as}k \rightarrow \infty,$$ which is a contradiction for large enough $k$. This completes the proof of the Lemma.
\end{proof}

We now have an important Corollary which proves a boundary affine approximation to viscosity solutions of \eqref{main}.

\begin{corollary}
\label{corollary5.2}
Let $\al$ be as in \eqref{al_alt}, then for any $u$ with $\|u\|_{L^{\infty}(\Om \cap B_1)}\leq 1$ solving \eqref{main} in the viscosity sense, there exist universal constants $\de_0 = \de_0(n,\La_0,\La_1,q,\al) \in (0,1)$ and $\la = \la(n,\La_0,\La_1,q,\de_0,\al) \in (0,1/2)$ such that if \eqref{eq4.3} holds for some $\de \in (0,\de_0)$, then there exists an affine function $L = bx_n$ on $B_{1/2}$ such that
\begin{equation*}
\|u - L\|_{L^{\infty}(\Om \cap B_{{\la}})} \leq  {\la}^{1+\al}.
\end{equation*}
\end{corollary}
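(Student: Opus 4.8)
\textbf{Proof proposal for Corollary \ref{corollary5.2}.}

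The plan is to combine the compactness estimate of Lemma \ref{lemma4.3} with the interior-boundary $C^{1,\be}$ regularity of Proposition \ref{bnd_reg}, and then extract from the limit profile an affine function of the form $L = bx_n$ that captures the first-order behavior of $u$ at the origin. First I would fix $\ve > 0$ to be chosen small, and apply Lemma \ref{lemma4.3} to produce $h \in C^{1,\be}(\overline{B_{1/2}})$ with $|u - h| \le \ve$ on $\Om \cap B_{1/2}$. Since $h$ restricted to the half-ball solves (up to the passage to the limit) a homogeneous equation $F_\infty(D^2 h) = 0$ with $h = 0$ on $\{x_n = 0\}$, Proposition \ref{bnd_reg} gives a uniform bound $\|h\|_{C^{1,\be}(\overline{B_{1/2}^+})} \le C$ with $C = C(n,\La_0,\La_1)$; moreover $h(0) = 0$ and $\nabla_{x'} h(0) = 0$ because $h$ vanishes identically on the flat portion of the boundary. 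Hence the first-order Taylor polynomial of $h$ at the origin is exactly $L := (\partial_{x_n} h(0))\, x_n =: b x_n$, and by the $C^{1,\be}$ bound we have $|h(x) - L(x)| \le C |x|^{1+\be}$ for $x \in B_{1/2}^+$, with the same $C$.

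Next I would choose the radius $\la$. Using $0 < \al < \be$ from \eqref{al_alt}, on $\Om \cap B_\la$ (which is contained in $B_\la^+$ after the flattening, up to an error controlled by $\|\Ga\|_{C^{1,\dini}} \le \de$ — this is where a small technical adjustment of the domain enters) we estimate
\begin{equation*}
\|u - L\|_{L^\infty(\Om \cap B_\la)} \le \|u - h\|_{L^\infty(\Om \cap B_\la)} + \|h - L\|_{L^\infty(B_\la^+)} \le \ve + C \la^{1+\be}.
\end{equation*}
Now pick $\la = \la(n,\La_0,\La_1,\al) \in (0,1/2)$ small enough that $C \la^{1+\be} \le \tfrac12 \la^{1+\al}$ (possible precisely because $\be > \al$, so $\la^{1+\be} = \la^{1+\al} \cdot \la^{\be - \al} \to 0$ faster), and then choose $\ve = \tfrac12 \la^{1+\al}$, which through Lemma \ref{lemma4.3} determines $\de_0 = \de(n,\La_0,\La_1,\ve,\dini)$. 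With $\de \in (0,\de_0)$ the two terms sum to at most $\la^{1+\al}$, as required.

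The main obstacle I anticipate is not the scaling arithmetic but making rigorous the claim that the comparison function $h$ from Lemma \ref{lemma4.3} really does satisfy the homogeneous boundary-value problem on the \emph{flat} half-ball with the precise vanishing $h(0) = 0$, $\nabla_{x'} h(0) = 0$ needed to read off $L = b x_n$ with no $x'$-dependence and no constant term. In the statement of Lemma \ref{lemma4.3}, $h$ is only asserted to be $C^{1,\be}(\overline{B_{1/2}})$ close to $u$; one must track through the compactness proof that $h$ is (the even-reflection extension of) a solution $u_\infty$ of \eqref{2.7}, so that the boundary conditions are genuinely those on $\{x_n = 0\}$. A related subtlety is that $\Om \cap B_\la$ is not exactly $B_\la^+$: the graph $\Ga$ has height $O(\de \la)$ with a $\dini$-modulus correction, so the bound $|h - L| \le C|x|^{1+\be}$ must be applied on a slightly perturbed region, absorbing the discrepancy into the constant $C$ or into a further smallness requirement on $\de_0$. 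Handling this requires that the flattening diffeomorphism used in Lemma \ref{ext_lemma} is close to the identity in $C^1$ when $\|\Ga\|_{C^{1,\dini}} \le \de$, which is where the $\dini$-regularity of $\pa\Om$ is quantitatively used.
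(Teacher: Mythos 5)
Your proposal is correct and follows essentially the same route as the paper: apply Lemma \ref{lemma4.3}, use the $C^{1,\be}$ regularity of the limit profile $u_{\infty}$ (which vanishes on $\{x_n=0\}$) to obtain $\|u_{\infty}-bx_n\|_{L^{\infty}(\Om\cap B_r)}\leq C r^{1+\be}$, then fix $\la$ via $C\la^{1+\be}\leq \tfrac12\la^{1+\al}$ (using $\al<\be$) and afterwards $\ve=\tfrac12\la^{1+\al}$, which fixes $\de_0$. The domain discrepancy you flag at the end is handled in the paper by the reflection extension \eqref{ext} built into the proof of Lemma \ref{lemma4.3}, which gives the $C^{1,\be}$ bound \eqref{eq4.10} on the full ball $B_{1/2}$, so the Taylor estimate is valid on all of $\Om\cap B_r$ without any extra smallness requirement on $\de_0$.
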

\begin{proof}
From Lemma \ref{lemma4.3}, we see that for any $\ve >0$, there exists constant $\de$ (depending on $\ve$) and a function $u_{\infty} \in C^{1,\be}(\overline{\Om \cap B_{1/2}})$ such that if \eqref{eq4.3} holds,  then 
\begin{equation}\label{eq4.12}
\|u - u_{\infty} \|_{L^{\infty}(\Om \cap B_{1/2})} \leq \ve.
\end{equation}
Since $u_{\infty}$ solves \eqref{2.7}, we see from the observation $u_{\infty} = 0$ on $B_1 \cap \{x_n =0\}$, there exists an affine function $L = bx_x$ such that for all $r \in (0,1/2)$, there holds
\begin{equation}\label{eq4.13}
\|u_{\infty} - L\|_{L^{\infty}(\Om \cap B_r)} \leq {C}_{\holder} r^{1+\be}.
\end{equation}
Combining \eqref{eq4.12} and \eqref{eq4.13}, we get
\begin{equation}\label{eq4.14}
\begin{array}{rcl}
\|u - L\|_{L^{\infty}(\Om \cap B_r)} & \leq & \|u-u_{\infty} \|_{L^{\infty}(\Om \cap B_r)} + \|u_{\infty} - L\|_{L^{\infty}(\Om \cap B_r)} \\
& \leq & \|u-u_{\infty} \|_{L^{\infty}(\Om \cap B_{1/2})} + \|u_{\infty} - L\|_{L^{\infty}(\Om \cap B_r)} \\
& \leq & \ve + {C}_{\holder} r^{1+\be}.
\end{array}
\end{equation}

We now make the following choice of exponents:
\begin{itemize}
\item From the choice  $\al < \be$ (see \eqref{al_alt}) and  the observation $r^{1+\be} \rightarrow 0$ as $r \rightarrow 0$, there exist ${\la} \in (0,1/2)$ such that 
\begin{equation*}
{C}_{\holder} {\la}^{1+\be} = \frac12 {\la}^{1+\al}.
\end{equation*}
\item Now choose $\ve = \frac12 {\la}^{1+\al}$ which in turn fixes $\de$.
\end{itemize}
Using these constants in \eqref{eq4.14}, we get
\begin{equation*}
\|u - L\|_{L^{\infty}(\Om \cap B_{{\la}})} \leq {\la}^{1+\al},
\end{equation*}
which completes the proof of the Corollary.

\end{proof}

\subsection{Reductions}

By rotation, translation and scaling, we shall henceforth always assume everything is centred at $0 \in \pa \Om$ and $\Om$ satisfies the set up in Definition \ref{dini_domain} with $R_0=1$. From the observation that $u(x) - u(0) - \iprod{\nabla_xg(0)}{x}$ is also a solution of \eqref{main},  without loss of generality, we can assume that $u(0) = 0$ and $\nabla_xg(0)=0$.

For any fixed $r_0$, let us define the following rescaled functions:
\begin{equation}\label{rescale}
\begin{array}{ll}
u_{\mr}(x):= u(\mr x), &\qquad  g_{\mr}(x):= g(\mr x), \\
f_{\mr}(x):= \mr^2 f(\mr x), &\qquad \Ga_{\mr}(x') := \frac1{\mr} \Ga(\mr x').
\end{array}
\end{equation}

We make the following observations about the rescaled functions defined in \eqref{rescale}.
\begin{description}
\descitem{Observation 1}{obv1} Using \eqref{main}, the rescaled functions from \eqref{rescale} solves the following equation:
\begin{equation*}\label{6.3}\left\{
\begin{array}{ll}
F_{r_0}(x,D^2 u_{r_0})= \mr^2 F\lbr\mr x, \frac{1}{\mr^2}D^2 u_{\mr}\rbr=\mr^2 f(\mr x) & \text{in}\ \Om_{\mr} \cap B_1,\\
u_{\mr}(x)=g_{\mr}(x) \ & \text{on}\  \pa \Om_{\mr} \cap B_1.
\end{array}\right.
\end{equation*}
Here we have set $\Om_{\mr} :=\{x \in \RR^n: \mr x \in \Om\}$.
\descitem{Observation 2}{obv2} Computing the $C^1$ norm of $\Ga_{\mr}$, we get
\begin{equation}\label{eq4.20}
\begin{array}{rcl}
|\nabla  \Ga_{\mr}(x') - \nabla  \Ga_{\mr}(y')| & = & \abs{\frac1{\mr} \mr \nabla \Ga(\mr x')-\frac1{\mr} \mr \nabla \Ga(\mr y')}\\
& \leq & \om_{\Ga}(|\mr x' - \mr y'|)  \to 0  \txt{as} r_0 \to 0.
\end{array}
\end{equation}

\descitem{Observation 3}{obv3} Analogous to the calculation leading to \eqref{eq4.20}, we can compute the $C^1$ norm of $g_{\mr}$ to get
\begin{equation*}
 \om_{ \nabla g_{\mr}}(\cdot) \to 0  \txt{as} r_0\to 0.
\end{equation*}
\descitem{Observation 4}{obv4} For any $r<1$, the following estimate holds:
\begin{equation*}
\begin{array}{rcl}
r \lbr \fint_{B_r} |f_{\mr}|^q \ dx \rbr^{\frac{1}{q}} & \leq & r \lbr \fint_{B_r} |f_{\mr}|^n \ dx \rbr^{\frac{1}{n}}=  \mr \lbr \int_{B_{\mr r}} |f(x)|^n \ dx \rbr^{\frac{1}{n}}.
\end{array}
\end{equation*}
Therefore, if $\mr$ is small enough, then $r \lbr \fint_{B_r} |f_{\mr}|^q \ dx \rbr^{\frac{1}{q}}$ can be made  uniformly small for all $r<1$.
\descitem{Observation 5}{obv5}
\[
\Theta_{F_{r_0}} (x, y)= \Theta_F(r_0x, r_0y).
\]
\end{description}

\subsection{Boundary Approximation by Affine function}

Let $\de_0$ be as obtained in Corollary \ref{corollary5.2} which in-turn fixes $\la$ which depends on $\de_0$ and  is independent of $\de \in (0,\de_0)$. Now we choose another exponent $\tde$ (satisfying \eqref{eq7.21}) such that
\begin{equation}\label{eq4.23}
\tde < \de.
\end{equation}
Furthermore, we assume \eqref{eq4.3} holds with $\de=\tde$ which in view of the above discussion can be ensured by choosing $r_0$ small enough. In view of \dref{obv5}{Observation 5}, we note that the rescaling preserves the $BMO$-norm of the nonlinearity. Therefore by letting $F_{r_0}$ as our new $F$, $\Om_{r_0}$ as our new $\Om$, $u_{r_0}$ as our new $u$ and so on and finally by letting $\Theta \leq \tilde \delta$ where $\Theta$ is the bound on BMO norm of $F$  as in Theorem \ref{main_thm}, we can assume that \eqref{eq4.3} is satisfied for $r_0$ small enough. 
 
Let us define a few more functions:
\begin{description}
\descitem{$\md_{\one}$}{mod1} With $\al$ from \eqref{al_alt}, let us define 
\begin{equation*}\label{6.10}
\tom_1(r):= \max\{ \om_{\Ga}(r), \om_{g}(r), r^{\al}\}.
\end{equation*}
After normalizing and using Theorem \ref{mod_concave}, we can assume $\tom_1(\cdot)$ is concave and $\tom_1(1)=1$. We now define
\begin{equation*}
\om_1(r) = \tom_1(r^{\al}),
\end{equation*}
from which we see that $\om_1$ is $\al$-decreasing in the sense of Definition \ref{eta_decreasing}.
This modulus $\om_1$ is still $\dini$-continuous which can be ascertained by a simple of change of variables in \eqref{dini_form}.
\descitem{$\md_{\two}$}{mod2} We define
\begin{equation}
\label{eq4.26}
\om_2(r):= |\Om \cap B_1|^{\frac1n} r \lbr \fint_{\Om \cap B_r} |f|^q  \ dx \rbr^{\frac1q} = {C}_{\two} \ r \lbr \fint_{\Om \cap B_r} |f|^q  \ dx \rbr^{\frac1q}.
\end{equation}

\descitem{$\md_{\three}$}{mod3} With ${\la}$ obtained from Corollary \ref{corollary5.2} and $\tde$ from \eqref{eq4.23}, we define
\begin{equation}
\label{eq4.27}
\om_3(\la^k) := \frac{1}{\tde} \sum_{i=0}^k \om_1(\la^{k-i}) \om_2(\la^i).
\end{equation}

\descitem{$\md_{\four}$}{mod4} Finally, we define
\begin{equation}
\label{eq4.28}
\om_4(\la^k):= \max\{ \om_3(\la^k), \la^{\al k}\}.
\end{equation}
\end{description}

Let us first prove a preliminary Lemma that follows from \cite{MR3169795}.
\begin{lemma}
\label{lemma4.5}
The following bound holds:
\begin{equation}
\label{Cbnd}
\sum_{i=0}^{\infty} \om_4(\la^i) \leq {C}_{{bnd}}.
\end{equation}
\end{lemma}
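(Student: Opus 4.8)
The goal is to show that $\sum_{i=0}^\infty \om_4(\la^i) < \infty$, where $\om_4(\la^k) = \max\{\om_3(\la^k), \la^{\al k}\}$. Since $\la \in (0,1/2)$, the geometric series $\sum_k \la^{\al k}$ converges trivially, so everything reduces to bounding $\sum_k \om_3(\la^k)$. Recalling \eqref{eq4.27}, namely $\om_3(\la^k) = \tde^{-1} \sum_{i=0}^k \om_1(\la^{k-i})\om_2(\la^i)$, the plan is to interchange the order of summation:
\[
\sum_{k=0}^\infty \om_3(\la^k) = \frac{1}{\tde} \sum_{k=0}^\infty \sum_{i=0}^k \om_1(\la^{k-i})\om_2(\la^i) = \frac{1}{\tde} \sum_{i=0}^\infty \om_2(\la^i) \sum_{j=0}^\infty \om_1(\la^j),
\]
so that the whole sum factors as a product of two one-dimensional sums. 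It then suffices to prove $\sum_{j=0}^\infty \om_1(\la^j) < \infty$ and $\sum_{i=0}^\infty \om_2(\la^i) < \infty$ separately.

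\textbf{The two series.} For the $\om_1$ series, recall $\om_1$ is $\dini$-continuous (as noted in the definition of $\md_{\one}$), i.e. $\int_0^1 \om_1(t)\,t^{-1}\,dt < \infty$. Since $\om_1$ is increasing, for each $j$ one has $\om_1(\la^{j+1}) \le \frac{1}{\la^j - \la^{j+1}}\int_{\la^{j+1}}^{\la^j} \om_1(t)\,dt \le \frac{1}{(1-\la)\la^{j+1}}\int_{\la^{j+1}}^{\la^j}\om_1(t)\,dt$, and since $t \le \la^j$ on that interval, $\frac{1}{\la^{j+1}} = \frac{1}{\la}\cdot\frac{1}{\la^j} \le \frac{1}{\la t}$, giving $\om_1(\la^{j+1}) \le \frac{1}{\la(1-\la)}\int_{\la^{j+1}}^{\la^j}\frac{\om_1(t)}{t}\,dt$; summing over $j \ge 0$ telescopes the intervals to $(0,1)$ and yields $\sum_{j\ge 1}\om_1(\la^j) \le \frac{1}{\la(1-\la)}\int_0^1 \frac{\om_1(t)}{t}\,dt < \infty$, and adding $\om_1(1)$ handles the $j=0$ term. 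The $\om_2$ series is handled identically: by \eqref{eq4.26}, $\om_2(r) = {C}_{\two}\, r(\fint_{\Om\cap B_r}|f|^q)^{1/q}$, which is (up to the constant $C_{\two}$ and a dimensional constant) the integrand density of the truncated Riesz potential ${\bf \tilde I}^f_q(0,r)$ of \eqref{rz}; the same telescoping argument shows $\sum_{i\ge 0}\om_2(\la^i) \lesssim \int_0^1 \frac{\om_2(t)}{t}\,dt \asymp {\bf \tilde I}^f_q(0,1)$, which is finite because $f \in L(n,1)$ and $q < n$ — this is exactly the content invoked via \eqref{convergence} in the introduction, and is the statement referred to as "following from \cite{MR3169795}." One should be slightly careful that $\om_2$ need not be monotone in $r$; but either one passes to a concave/increasing majorant as was done for $\om_1$, or one uses directly the Hardy-type inequality bounding the dyadic sum of $r(\fint_{B_r}|f|^q)^{1/q}$ by its integral version, which is precisely the Riesz potential comparison used in \cite{MR3004772, MR3169795}.

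\textbf{Main obstacle.} The only genuinely nontrivial ingredient is the finiteness of $\sum_i \om_2(\la^i)$, equivalently the finiteness of the dyadic Riesz-potential sum for $f \in L(n,1)$ with $q < n$; this is a known fact (the quantitative form of \eqref{convergence}) and is where the hypothesis $f \in L(n,1)$ and the choice $q < n$ are actually used, so it is the step I would cite rather than reprove. Everything else — the Fubini interchange producing the product structure, the geometric bound on $\sum \la^{\al k}$, and the telescoping/Hardy estimate converting dyadic sums of moduli into Dini integrals — is routine, and the resulting bound $C_{bnd}$ depends only on $\la$, $\tde$, the $\dini$-moduli of $\pa\Om$ and $g$ (through $\om_1$), and the $L(n,1)$-norm of $f$ (through $\om_2$), as claimed.
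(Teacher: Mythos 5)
Your proposal is correct and follows essentially the same route as the paper: split off the geometric series from $\om_4$, factor the Cauchy-product structure of $\om_3$ into $\bigl(\sum_i\om_1(\la^i)\bigr)\bigl(\sum_i\om_2(\la^i)\bigr)$, control the first factor by the Dini integral of $\om_1$, and control the second by the truncated Riesz potential bound for $f\in L(n,1)$ with $q<n$ cited from \cite{MR3169795} (Equations (3.4) and (3.13) there). Your extra care about the possible non-monotonicity of $\om_2$ is a reasonable refinement, but it is resolved by the same citation the paper uses, so the arguments coincide in substance.
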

\begin{proof}
From 	\eqref{eq4.28}, we have the trivial bound
\begin{equation*}
\label{eq4.30}
\sum_{i=0}^{\infty} \om_4(\la^i) \leq \sum_{i=0}^{\infty} \om_3(\la^i) + \sum_{i=0}^{\infty} \la^{i\al} \leq \sum_{i=0}^{\infty} \om_3(\la^i)  + {C}_{gm}.
\end{equation*}
In the above estimate, the constant ${C}_{gm}$ is the sum of geometric progression $\sum_{i=0}^{\infty} \la^{i\al}$. Hence, in order to prove the Lemma, it suffices to bound the first term, to do this, using \eqref{eq4.27} along with the fact that $\om_1(\cdot)$ is increasing, we get
\begin{equation}
\label{eq4.31}
\sum_{i=0}^{\infty} \om_3(\la^i) = \frac{1}{\tde} \sum_{i=0}^{\infty} \sum_{j=0}^{i} \om_1(\la^{i-j})\om_2(\la^j) = \frac{1}{\tde} \lbr \sum_{i=0}^{\infty} \om_1(\la^i) \rbr\lbr \sum_{i=0}^{\infty} \om_2(\la^i) \rbr.
\end{equation}

\begin{description}
\item[Estimate for $\sum_{i=0}^{\infty} \om_1(\la^i)$] Using the fact that $\om_1(\cdot)$ is $\dini$-continuous, we get 
\begin{equation}
\label{eq4.32}
\sum_{i=0}^{\infty} \om_1(\la^i) \leq \frac{1}{-\log \la} \sum_{i=1}^{\infty} \int_{\la^i}^{\la^{i-1}} \frac{\om_1(s)}{s} \ ds = \int_0^1 \frac{\om_1(s)}{s} \ ds < \infty.
\end{equation}

\item[Estimate for $\sum_{i=0}^{\infty} \om_2(\la^i)$] From \cite[Equation (3.4)]{MR3169795}, there exists a constant $\tilde{c}$ such that
\begin{equation}
\label{eq4.33}
\sum_{i=0}^{\infty} \om_2(\la^i) \leq \tilde{c} \int_0^2 \lbr \fint_{\Om \cap B_{\rho}} |f(x)|^q \ dx \rbr^{\frac{1}{q}} \ d\rho =: \tilde{c} {\bf \tilde I}_q^f(0,2).
\end{equation}
Further making use of  \cite[Equation (3.13)]{MR3169795}, we get
\begin{equation}
\label{eq4.344}
\sup_x {\bf \tilde I}_q^f(x,r) \leq \frac{1}{|B_1|^{\frac1n}} \int_{0}^{|B_r|} \left[ f^{**}(\rho) \rho^{\frac{q}{n}} \right]^{\frac{1}{q}} \ \frac{d\rho}{\rho}.
\end{equation}

\end{description}

Thus using \eqref{eq4.344}, \eqref{eq4.33} and \eqref{eq4.32} into \eqref{eq4.31}, we get the bound in \eqref{Cbnd}.
This completes the proof of the lemma.
\end{proof}

\begin{remark}
From the estimates \eqref{eq4.33} and \eqref{eq4.344}, we see that 
\begin{equation}
\label{eq4.355}
\sum_{i=0}^{\infty} \om_2(\la^i)  \leq  \tilde{c} \frac{1}{|B_1|^{\frac1n}} \int_{0}^{|B_2|} \left[ f^{**}(\rho) \rho^{\frac{q}{n}} \right]^{\frac{1}{q}} \ \frac{d\rho}{\rho}.
\end{equation}
\end{remark}

We need to prove another crucial bound given in the following lemma:
\begin{lemma}
\label{lemma4.6}
For a fixed $k$, the following bound holds:
\begin{equation*}
\la^{\al} \om_4(\la^k) \leq \om_4(\la^{k+1}).
\end{equation*}

\end{lemma}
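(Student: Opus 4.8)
The plan is to unwind the definition \eqref{eq4.28} and reduce the claim to a single inequality for $\om_3$. Since $\om_4(\la^k)=\max\{\om_3(\la^k),\la^{\al k}\}$ and multiplication by the positive constant $\la^\al$ commutes with $\max$, we have
\[
\la^\al\om_4(\la^k)=\max\{\la^\al\om_3(\la^k),\,\la^{\al(k+1)}\},
\]
so it suffices to bound each of the two terms on the right by $\om_4(\la^{k+1})=\max\{\om_3(\la^{k+1}),\la^{\al(k+1)}\}$. The second term is immediate: $\la^{\al(k+1)}\le\om_4(\la^{k+1})$ by definition. Hence the whole statement follows once we show the single estimate
\[
\la^\al\om_3(\la^k)\le\om_3(\la^{k+1}),
\]
because then $\la^\al\om_3(\la^k)\le\om_3(\la^{k+1})\le\om_4(\la^{k+1})$ as well.

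For this estimate I would work directly from the defining sum \eqref{eq4.27}. Writing out
\[
\om_3(\la^{k+1})=\frac{1}{\tde}\sum_{i=0}^{k+1}\om_1(\la^{k+1-i})\,\om_2(\la^i),
\]
the first move is to discard the $i=k+1$ summand $\om_1(1)\om_2(\la^{k+1})$, which is nonnegative since $\om_1,\om_2\ge0$; this leaves a lower bound consisting of the sum over $i=0,\dots,k$. The remaining summands are then compared term by term with those of $\la^\al\om_3(\la^k)=\frac{\la^\al}{\tde}\sum_{i=0}^k\om_1(\la^{k-i})\om_2(\la^i)$: for each fixed $i\in\{0,\dots,k\}$ the factor $\om_2(\la^i)$ is common, so it remains to check $\om_1(\la^{k+1-i})\ge\la^\al\om_1(\la^{k-i})$.

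This last inequality is exactly where the $\al$-decreasing property of $\om_1$ recorded in \dref{mod1}{$\md_{\one}$} is used. Since $\la<1$ we have $\la^{k+1-i}\le\la^{k-i}$, and $\al$-decreasing (Definition \ref{eta_decreasing}) gives
\[
\frac{\om_1(\la^{k+1-i})}{\la^{\al(k+1-i)}}\ \ge\ \frac{\om_1(\la^{k-i})}{\la^{\al(k-i)}},
\]
which rearranges to $\om_1(\la^{k+1-i})\ge\la^\al\om_1(\la^{k-i})$, as needed. Summing over $i=0,\dots,k$ and combining with the discarded nonnegative term yields $\la^\al\om_3(\la^k)\le\om_3(\la^{k+1})$, and then the reduction above finishes the proof. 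I do not expect a genuine obstacle here; the only points requiring care are the bookkeeping of which index is dropped from the shifted sum and making sure the direction of every inequality is consistent with $\la<1$, the heart of the argument being the single application of the $\al$-decreasing property of $\om_1$.
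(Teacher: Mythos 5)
Your proposal is correct and follows essentially the same route as the paper: the reduction via the maximum in \eqref{eq4.28} is just a restatement of the paper's case split, and the key step $\la^{\al}\om_1(\la^{k-i})\le\om_1(\la^{k+1-i})$ via the $\al$-decreasing property of $\om_1$, followed by re-indexing into the sum defining $\om_3(\la^{k+1})$, is exactly the paper's argument.
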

\begin{proof}
From \eqref{eq4.28}, if $\om_4(\la^k) = \la^{\al k}$, then trivially, we get 
\begin{equation*}
\la^{\al} \om_4(\la^k) = \la^{\al(k+1)} \overset{\eqref{eq4.28}}{\leq} \om_4(\la^{k+1}).
\end{equation*}

Hence, we only have the consider the case $\om_4(\la^k) = \om_3(\la^k)$. In this case, we proceed as follows:
\begin{equation*}
\la^{\al} \om_3(\la^k) = \frac{1}{\tde} \sum_{i=0}^k \la^{\al}\om_1(\la^{k-i}) \om_2(\la^i) \overset{\redlabel{4.6a}{a}}{\leq}  \frac{1}{\tde} \sum_{i=0}^k \om_1(\la^{k+1-i}) \om_2(\la^i) \overset{\redlabel{4.6b}{b}}{\leq} \om_3(\la^{k+1}).
\end{equation*}
To obtain \redref{4.6a}{a}, we made use of the fact that $\om_1(\cdot)$ is $\al$-decreasing in the sense of Definition \ref{eta_decreasing} and to obtain \redref{4.6b}{b}, we used the definition of the expression in \eqref{eq4.27}.
\end{proof}

We now prove the following important lemma which gives a linear approximation to the solution $u$ of \eqref{main} at the boundary.

\begin{lemma}
\label{lemma4.7}
Let $\al$ be as in \eqref{al_alt}, then for any $u$ with $\|u\|_{L^{\infty}(\Om \cap B_1)}\leq 1$ solving \eqref{main} in the viscosity sense, there exist universal constants $\tde = \tde(n,\La_0,\La_1,q,\al) \in (0,\de_0)$ where $\de_0$ is from Corollary \ref{corollary5.2} such that if \eqref{5.4} holds with $\de=\tde$, then there exists a sequence of affine functions $L_k:= b_k x_n$ for $k=0,1,2\ldots$ such that the following holds:
\begin{description}
\descitem{A1}{a1} $|u - L_k| \leq \la^k \om_4(\la^k)$,
\descitem{A2}{a2} $|b_k - b_{k+1}| \leq {C}_{{b}} \om_4(\la^k) \txt{in} \Om \cap B_{\la^k}$.
\end{description}
Here $\la$ is from Corollary \ref{corollary5.2}.
\end{lemma}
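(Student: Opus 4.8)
The plan is to prove \descref{a1}{A1} and \descref{a2}{A2} simultaneously by induction on $k$, where the engine at each step is the one-step affine improvement provided by Corollary \ref{corollary5.2} applied to a suitably rescaled and renormalized version of the solution. The base case $k=0$ is trivial: take $L_0 = 0$ (so $b_0 = 0$), and \descref{a1}{A1} holds since $\|u\|_{L^\infty(\Om \cap B_1)} \leq 1 \leq \om_4(1)$ after the normalization $\tom_1(1)=1$ forces $\om_4(1) \geq 1$, while \descref{a2}{A2} is vacuous at the first stage. For the inductive step, assume $L_0,\dots,L_k$ have been constructed. First I would isolate the ``remainder'' $u - L_k$ on $\Om \cap B_{\la^k}$, which by \descref{a1}{A1} is bounded in sup-norm by $\la^k \om_4(\la^k)$, and rescale: set $v(x) := \big(u(\la^k x) - L_k(\la^k x)\big) / \big(\la^k \om_4(\la^k)\big)$ on $\Om_{\la^k} \cap B_1$, so that $\|v\|_{L^\infty} \leq 1$. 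The function $v$ solves an equation of the same structure (a rescaled $F$, still $\Theta$-BMO with the same constant by \dref{obv5}{Observation 5}, with rescaled right-hand side and boundary datum), so Corollary \ref{corollary5.2} applies to $v$ provided its data satisfy \eqref{eq4.3} with $\de = \tde$.

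The heart of the argument — and the step I expect to be the main obstacle — is verifying precisely that hypothesis: that after this rescaling the $C^{1,\dini}$ norm of the boundary $\Ga$, the $C^{0,1}$ norm of the boundary datum, and the (scale-invariant) $L^q$ quantity for the right-hand side are all $\leq \tde$. The right-hand side of $v$'s equation is $f_{\la^k}$ divided by $\om_4(\la^k)$, whose scale-$r$ $L^q$-average is controlled, via \dref{obv4}{Observation 4} and the definition \eqref{eq4.26} of $\om_2$, by $\om_2(\la^k)/\om_4(\la^k)$ — and by \eqref{eq4.27}–\eqref{eq4.28} this is bounded by $\tde$ times a harmless constant since $\om_2(\la^k) \leq \tde\,\om_3(\la^k) \leq \tde\,\om_4(\la^k)$. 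The boundary contribution is the genuinely delicate part, because after subtracting $L_k$ the boundary datum of $v$ is not simply $g$ rescaled: it is $\big(g(\la^k x) - b_k \Ga_{\la^k}(x)\big)/\big(\la^k\om_4(\la^k)\big)$ restricted to the rescaled boundary $\{x_n = \Ga_{\la^k}(x')\}$, where the discrepancy $b_k\,x_n - b_k\Ga_{\la^k}(x')$ along that graph brings in $\om_{\Ga}$, and the telescoped bound $|b_k| \leq \sum_{i<k} |b_i - b_{i+1}| \leq C_b \sum_i \om_4(\la^i) \leq C_b\,C_{bnd}$ (using \descref{a2}{A2} and Lemma \ref{lemma4.5}) keeps this under control. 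Combining, the $C^{0,1}$ norm of $v$'s boundary datum is bounded by a constant times $\big(\om_g(\la^k) + |b_k|\,\om_\Ga(\la^k)\big)/\om_4(\la^k)$, which is $\lesssim \om_1(\la^k)/\om_4(\la^k) \lesssim \tde$ after absorbing $C_{bnd}$ into the definition of $\tde$ — this is exactly the point the introduction flags as requiring ``smallness of the boundary datum at each step.'' The rescaled $\|\Ga_{\la^k}\|_{C^{1,\dini}} \to 0$ similarly by \dref{obv2}{Observation 2} and the $\dini$-continuity of $\om_1$.

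With the hypotheses of Corollary \ref{corollary5.2} verified for $v$, it yields an affine function $\ell = \be\, x_n$ on $B_\la$ with $\|v - \ell\|_{L^\infty(\Om_{\la^k}\cap B_\la)} \leq \la^{1+\al}$. Undoing the rescaling, this produces the new affine function $L_{k+1}(x) := L_k(x) + \la^k\om_4(\la^k)\,\ell(x/\la^k)$, i.e. $b_{k+1} = b_k + \om_4(\la^k)\,\be$, so that $|b_k - b_{k+1}| = |\be|\,\om_4(\la^k) \leq C_b\,\om_4(\la^k)$ with $C_b := \sup_{v}|\be|$ a universal bound (the slope of the limiting $C^{1,\beta}$ profile is controlled by its $C^{1,\beta}$ norm, hence universal), giving \descref{a2}{A2} at level $k$. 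For \descref{a1}{A1} at level $k+1$, on $\Om \cap B_{\la^{k+1}}$ we have $|u - L_{k+1}| = \la^k\om_4(\la^k)\,|v - \ell|(\,\cdot/\la^k) \leq \la^k\om_4(\la^k)\,\la^{1+\al} = \la^{k+1}\big(\la^\al\om_4(\la^k)\big) \leq \la^{k+1}\om_4(\la^{k+1})$, where the last inequality is precisely Lemma \ref{lemma4.6}. This closes the induction and completes the proof. The remaining care is purely bookkeeping: one must choose $\tde$ small enough (depending only on $n,\La_0,\La_1,q,\al$ via $C_{bnd}$, $C_b$, $C_{\two}$ and the universal constant multiplying $\tde$ in the data estimates) once and for all before starting the iteration, which is consistent since $C_{bnd}$ from Lemma \ref{lemma4.5} depends only on the $\dini$-data and the $L(n,1)$-character of $f$, not on $\tde$ itself — one sees from \eqref{eq4.31}–\eqref{eq4.33} that the $1/\tde$ in $\om_3$ cancels against the $\tde$ hidden in $\om_2$'s contribution only after fixing the balance, so a brief fixed-point-free argument orders the choices as: fix $\al$, get $\de_0,\la$ from Corollary \ref{corollary5.2}, then shrink $\tde < \de_0$ to beat all the universal constants.
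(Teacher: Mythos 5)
Your proposal is correct and follows essentially the same route as the paper: induction with the rescaling $v=(u(\la^k\cdot)-L_k(\la^k\cdot))/(\la^k\om_4(\la^k))$, verification of \eqref{5.4} for the rescaled data (with the boundary datum handled via the telescoped bound $|b_k|\le C_bC_{bnd}$ from \descref{a2}{A2} and Lemma \ref{lemma4.5}), one application of Corollary \ref{corollary5.2}, and Lemma \ref{lemma4.6} to close \descref{a1}{A1}. Your closing remark about the $1/\tde$ in $\om_3$ balancing against the smallness of $\sum_i\om_2(\la^i)$ is a fair observation about how the constants must be ordered, and is consistent with (indeed slightly more explicit than) the paper's own choice \eqref{eq7.21}.
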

\begin{proof}
The proof is by induction. In the case $k=0$, we have  $L_{\infty}=0$ and trivially get
\[
\|u\|_{L^{\infty}(\Om \cap B_1)} \leq 1 \overset{\eqref{eq4.28}}{\leq} \om_4(1).
\]

Let $k\in \NN$ be fixed and assume \descref{a1}{A1} and \descref{a2}{A2} holds for all $i =0,1,\ldots,k$. In order to prove the lemma, it suffices to show \descref{a1}{A1} and \descref{a2}{A2} holds  true for $k+1$. In order to do this, we rescale and make use of Corollary \ref{corollary5.2}.

Let us define the following rescaled functions defined for $x \in \tilde \Om \cap B_{1}$ where $\tilde \Om= \Om_{\la^k}$ (Note that in view of the discussion in \dref{obv2}{Observation 2}, $\tilde \Om$ is more "flat" than $\Om$ since $\la <1 $).
\begin{equation}\label{eq4.29}
\begin{array}{c}
v(x) := \frac{u(\la^k  x) - L_k( \la^k x) }{\la^k \om_4(\la^k)}, \qquad   \tF(x,M) := \frac{\la^k}{\om_4(\la^k)} F\lbr \la^k x, \frac{\om_4(\la^k)}{\la^k} M \rbr, \\
\tf(x):= \frac{\la^k}{\om_4(\la^k)} f(\la^k x), \qquad \tg(x):= \left.\frac{g(\la^k x)}{\la^k \om_4(\la^k)}\right|_{\pa \tilde \Om \cap B_{1}}, \qquad \tL_k(x):= \left.\frac{L_k(\la^k x)}{\la^k \om_4(\la^k)}\right|_{\pa \tilde \Om \cap B_{1}}.
\end{array}
\end{equation}
From the induction hypothesis on \descref{a1}{A1}, we see that $\|v\|_{L^{\infty}(\tilde \Om \cap B_{1})} \leq 1$. Furthermore, $v$ solves the equation
\begin{equation*}
\label{v_main}
\left\{\begin{array}{ll}
       \frac{\la^k}{\om_4(\la^k)} F\lbr \la^k x, \frac{\om_4(\la^k)}{\la^k} D^2 v\rbr = \frac{\la^k}{\om_4(\la^k)} f(\la^k x) & \txt{in} \tilde \Om \cap B_{1}\\
       v(x) = \frac{g(\la^kx) - L_k(\la^k x)}{\la^k \om_4(\la^k)}& \txt{on} \pa \tilde \Om \cap B_1.
       \end{array}
\right.
\end{equation*}

We shall now show that \eqref{5.4} is satisfied for some $\tde$ (this is where we make a choice for $\tde$) which enables us to apply Corollary \ref{corollary5.2} and complete the induction argument. Let us now check each of the terms in \eqref{5.4} are satisfied:
\vspace{.01in}
\begin{description}
\item[Bound for $\Theta_{\tF}$] We have the following bound:
\begin{equation*}
\Th_{\tF}(x,y)  =  \frac{\la^k}{\om_4(\la^k)} \sup_{M} \frac{\abs{F\lbr \la^k x, \frac{\om_4(\la^k)}{\la^k} M \rbr - F\lbr \la^k y, \frac{\om_4(\la^k)}{\la^k} M \rbr  }}{\|M\|} =  \Th_F(\la^kx, \la^k y).
\end{equation*}
In particular, the following estimate holds
\begin{equation*}
\Th_{\tF}(x,y) =  \Th_F(\la^kx, \la^k y).
\end{equation*}

\item[Bound for $\tf$]In this case, we get the sequence of estimates
\begin{equation*}
\begin{array}{rcl}
\lbr \int_{\tilde \Om \cap B_1} |\tf(x)|^q \ dx \rbr^{\frac1q} & = & \frac{\la^k}{\om_4(\la^k)} \lbr \fint_{\Om \cap B_{\la^k}} |f(x)|^q \ dx \rbr^{\frac1q}
 \overset{\eqref{eq4.26}}{=}  \frac{1}{{C}_{\two}}\frac{\om_2(\la^k)}{\om_4(\la^k)}\\
& \overset{\eqref{eq4.27}}{\leq} & \frac{1}{{C}_{\two}}\tde \frac{\om_2(\la^k)}{\om_1(1)\om_2(\la^k)}\\
& \overset{\text{\descref{mod2}{$\md_{\two}$}}}{\leq} & \frac{\tde}{{C}_{\two}}.
\end{array}
\end{equation*}

\item[Bound for $\tg$] By using the fact that $\nabla g$ has  modulus of continuity given by $\tde \om_1(\cdot)$ and  also that $\nabla g(0)=0$, we get from the mean value theorem that the following holds for any $y, z \in \pa \tilde \Om \cap B_1$:
\begin{equation}\label{eq4.35}
\begin{array}{rcl}
|\tg(y) - \tg(z)| & = & \frac{|g(\la^k y) - g(\la^k z)|}{\la^k \om_4(\la^k)}\\
& \leq & \frac{\tde \om_1(\la^k) |\la^k y - \la^k z|}{\la^k \om_4(\la^k)}\\
& \overset{\eqref{eq4.27},\eqref{eq4.28}}{\leq} & \tde |y-z|.
\end{array}
\end{equation}

\item[Bound for $\tL_k$] Again since  $\nabla \Ga$ has  modulus of continuity given by $\tde \om_1(\cdot)$ and $\nabla_{x'} \Ga(0)=0$,  we obtain by an analogous computation  and from the expression of $\tilde L_k$ as in \eqref{eq4.29} that the following holds for $y, z \in \pa \tilde \Om \cap B_1$,
\begin{equation}
\label{eq4.43}
\tilde L_k(y) - \tilde L_k(z) =  \frac{b_k(\la^ky_n-\la^kz_n)}{\la^k \om_1(\la^k)} = \frac{b_k  (\Ga(\la^k y') - \Ga(\la^k z'))}{\la^k \om_1(\la^k)} \leq b_k \tde |y-z|.
\end{equation}
Hence
\begin{equation*}
\|\tL_k\|_{C^{0,1}(\pa \tilde \Om \cap B_1)} \leq b_k \tde.
\end{equation*}

\item[Bound for $b_k$] From the induction hypothesis applied to \descref{a2}{A2}, we get
\begin{equation}
\label{eq4.45}
|b_k| \leq \sum_{i=1}^k |b_i - b_{i-1}| {\leq} {C}_{{b}}\sum_{i=0}^{k-1} \om_4(\la^i) \overset{\text{Lemma \ref{lemma4.5}}}{\leq} {C}_{{b}} {C}_{{bnd}}.
\end{equation}

\item[Choice of $\tde$] Combining the estimate from \eqref{eq4.35}, \eqref{eq4.43} and 	\eqref{eq4.45}, we get
\begin{equation*}
\left\| \frac{g(\la^k x) - L_k(\la^k x)}{\la^k \tom(\la^k)}\right\|_{C^{0,1}(\pa \Om \cap B_1)} \leq ({C}_{{b}}{C}_{{bnd}}+1) \tde.
\end{equation*}
We will choose $\tde$ smaller than $\de$ from \eqref{eq4.23} satisfying
\begin{equation}
\label{eq7.21}
\lbr {C}_{{b}}{C}_{{bnd}}+1 +\frac{1}{C_{\two}}\rbr \tde \leq \de.
\end{equation}

\end{description}

Thus all the hypothesis of Corollary \ref{corollary5.2} are satisfied and thus, we can find an affine function $\tL = b x_n$ in $\tilde \Om \cap B_{\la}$ such that
\begin{equation}
\label{eq4.48}
|v(x) - \tL(x)| \leq \la^{1+\al} \txt{for all} x \in \tilde \Om \cap B_{\la}.
\end{equation}
There also holds the following bound
\begin{equation}\label{7.23}
|b| \leq {C}. 
\end{equation}

In particular, if we define
\begin{equation*}
L_{k+1}(y) := L_k(y) + \la^k \om_4(\la^k) \tL\lbr\frac{y}{\la^k}\rbr \txt{for} y \in \Om \cap B_{\la^{k+1}},
\end{equation*}
then clearly after scaling back, we get 
\begin{equation*}
|u(y) - L_{k+1}(y)| \leq \la^{k+1} \la^{\al} \om_4(\la^k) \overset{\text{Lemma \ref{lemma4.6}}}{\leq} \la^{k+1} \om_4(\la^{k+1}).
\end{equation*}
Moreover, it follows from \eqref{7.23} and the expression of $L_{k+1}$ as above that
\[
|b_k -b_{k+1}| \leq C \om_4(\la^k).
\]
This completes the proof of the lemma. 
\end{proof}
 
With $L_k$ as in Lemma \ref{lemma4.7}, letting $k \rightarrow \infty$, we see that $L_k \rightarrow L_{\infty}$ for some linear function $L_{\infty}$. In the following lemma, we show that $L_{\infty}$ is an affine approximation to the viscosity solution $u$ of \eqref{main} at the boundary.
\begin{lemma}
\label{lemma4.9}
The linear function $L_{\infty}:= \lim_{k\rightarrow \infty} L_k$ is the affine approximation of $u$ on $\pa \Om$. In particular, given any $x_0 \in \pa \Om \cap B_{1/2}$, then there exists a modulus of continuity $K(\cdot)$ such that 
\begin{equation*}
|u(x) - L_{x_0}(x)| \leq C_{\text{aff}} |x-x_0| K(|x-x_0|).
\end{equation*}
Moreover, $K(\cdot)$ can be chosen to  $\al$-decreasing in the sense of Definition \ref{eta_decreasing} with $\al$ as in \eqref{al_alt}. \end{lemma}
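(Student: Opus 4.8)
The plan is to pass from the discrete dyadic estimate in Lemma \ref{lemma4.7} to a continuous modulus of continuity, and then to upgrade the estimate from the distinguished boundary point $0$ to an arbitrary point $x_0 \in \pa\Om \cap B_{1/2}$ by the usual translation argument. First I would introduce the modulus of continuity $K$ by setting, for $\la^{k+1} \leq t \leq \la^{k}$,
\[
K(t) := C \sum_{i=0}^{k} \om_4(\la^i),
\]
where $C$ is a universal constant and $\om_4$ is as defined in \redref{mod4}{$\md_{\four}$}. By Lemma \ref{lemma4.5} this is a finite, increasing, bounded function on $(0,1]$ with $K(t) \to 0$ as $t\to 0$ (since the tail of the convergent series $\sum \om_4(\la^i)$ goes to zero); after replacing it by a concave majorant via Theorem \ref{mod_concave} it is a genuine modulus of continuity. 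The $\al$-decreasing property will follow from Lemma \ref{lemma4.6}: since $\la^\al \om_4(\la^k) \leq \om_4(\la^{k+1})$, each successive partial sum decreases slowly enough relative to the geometric factor $\la^{\al}$, which gives $K(t_1)/t_1^{\al} \geq K(t_2)/t_2^\al$ for $t_1 \leq t_2$ after the usual comparison of dyadic scales.

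Next I would establish the pointwise bound at $0$. Given $x \in \Om \cap B_{1/2}$, pick $k$ with $\la^{k+1} \leq |x| \leq \la^{k}$. Since $L_\infty = \lim L_k$ with $|b_k - b_{k+1}| \leq C_b\,\om_4(\la^k)$ by \descref{a2}{A2}, we get $|b_k - b_\infty| \leq C_b \sum_{i\geq k}\om_4(\la^i)$, hence $|L_k(x) - L_\infty(x)| \leq |b_k - b_\infty|\,|x| \leq C_b |x| \sum_{i\geq k}\om_4(\la^i)$. Combining this with \descref{a1}{A1}, $|u(x) - L_k(x)| \leq \la^k \om_4(\la^k) \leq C|x|\,\om_4(\la^k)$, and the triangle inequality gives
\[
|u(x) - L_\infty(x)| \leq C|x|\Bigl(\om_4(\la^k) + \sum_{i\geq k}\om_4(\la^i)\Bigr) \leq C|x|\,K(|x|),
\]
using that $\la^{k} \leq |x|/\la$ so the scales are comparable. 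This is exactly the desired estimate with $x_0 = 0$ and $L_0 := L_\infty$.

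Finally, for a general $x_0 \in \pa\Om \cap B_{1/2}$, I would repeat the entire construction of Section \ref{section4} centred at $x_0$ instead of at $0$: after translation, rotation and scaling (noting $B_{1/2}(x_0) \subset B_1$ up to a harmless universal rescaling), the hypotheses of Lemma \ref{lemma4.7} still hold at $x_0$ — crucially, the $\dini$ moduli $\om_\Ga$, $\om_g$ and the Riesz-potential modulus $\om_2$ are defined through suprema/averages and so transform in a controlled, $x_0$-uniform way, which is the content of the uniform bounds in Definition \ref{BMO-def} and \redref{mod2}{$\md_{\two}$}. This produces an affine function $L_{x_0} = b_{x_0,\infty}x_n$ (in the rotated frame at $x_0$) satisfying $|u(x) - L_{x_0}(x)| \leq C_{\text{aff}}|x-x_0|K(|x-x_0|)$ with the same $K$, since $K$ was built only from the $\dini$ data of $\pa\Om$, of $g$, and the $L(n,1)$ character of $f$, none of which depends on the base point. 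The main obstacle is verifying that the constants in the construction — in particular $\tde$, which was chosen in \eqref{eq7.21} to absorb the $C_b C_{bnd}$ term — can be taken uniformly in $x_0$; this requires checking that $C_{bnd}$ from Lemma \ref{lemma4.5} is bounded independently of the centre, which holds because \eqref{eq4.344} bounds $\sup_x \tilde I_q^f(x,r)$ by a quantity depending only on the decreasing rearrangement $f^{**}$, i.e. only on the $L(n,1)$ norm of $f$.
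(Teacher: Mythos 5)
Your overall strategy is the same as the paper's: combine \descref{a1}{A1} and \descref{a2}{A2} from Lemma \ref{lemma4.7} via the triangle inequality to get $|u(x)-L_{\infty}(x)|\leq C\la^k\bigl(\om_4(\la^k)+\sum_{i\geq k}\om_4(\la^i)\bigr)$ for $|x|\approx\la^k$, and reduce the general $x_0$ to $x_0=0$ by translation. Two points need repair, one of them substantive.

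First, as written your modulus is the \emph{head} of the series, $K(t):=C\sum_{i=0}^{k}\om_4(\la^i)$ for $\la^{k+1}\leq t\leq\la^k$; this converges to the full sum ${C}_{bnd}>0$ as $t\to 0$, so it is not a modulus of continuity. Your parenthetical makes clear you intend the tail $\sum_{i=k}^{\infty}\om_4(\la^i)$, so this is presumably a slip, but it must be corrected. Relatedly, the $\al$-decreasing property does not follow directly from Lemma \ref{lemma4.6} for the resulting piecewise-constant tail function: at a dyadic jump $t=\la^{k+1}$ one needs $\sum_{i\geq k+1}\om_4(\la^i)\geq\sum_{i\geq k}\om_4(\la^i)$, which is false; Lemma \ref{lemma4.6} only controls the jump up to the factor $(1+\la^{-\al})$, so one must pass to a concave majorant and substitute $s\mapsto s^{\al}$, as the paper does for each $K_i$ in \eqref{def_K}.

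Second, and more importantly, you stop exactly where the paper's proof begins its real work. The tail $\sum_{i\geq k}\om_4(\la^i)$ depends on the base point $x_0$, because $\om_2$ in \eqref{eq4.26} (and hence the convolution $\om_3$ in \eqref{eq4.27}) is built from averages of $f$ over balls centred at $x_0$. The lemma is used later (Lemma \ref{lemma5.1}, Proposition \ref{prop5.1}, and the proof of Theorem \ref{main_thm}) with one fixed modulus $K$ serving all boundary points simultaneously, so you must replace the tail by a base-point-free quantity. This is precisely the content of the paper's decomposition of $\sum_{i\geq k}\om_4(\la^i)$ into $I+II+III$, splitting the convolution sum at $j=i/2$, and of the definitions of $K_1,K_2,K_3$ in \eqref{def_K}: the $\om_1$-part is dominated by $\sup_a\int_a^{a+\ve^{1/2}}\om_1(t)t^{-1}\,dt$ and the $\om_2$-part by the rearrangement integral via \eqref{eq4.344}. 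You cite \eqref{eq4.344} only to justify uniformity of ${C}_{bnd}$ and then assert that your $K$ "does not depend on the base point," which is not true of the tail as you defined it; the passage to $f^{**}$ has to be carried out in the definition of $K$ itself, not merely invoked for the constants.
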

 \begin{proof}
Without loss of generality,  we can assume that $x_0= 0$ and also that we are in the setup of  Lemma \ref{lemma4.3}. Now  let $x \in \Om \cap B_1$ such that $|x| \approx \la^k$ with $\la$  coming from Corollary \ref{corollary5.2}. In particular, we pick a point satisfying $\la^k \leq |x|\leq 2 \la^k$ for some $k \in \NN$. We have the following sequence of estimates:
\begin{equation}
\label{eq4.58}
\begin{array}{rcl}
|u(x) - L_{\infty}(x)| & \leq & |u(x) - L_k(x)|+ |L_k(x) - L_{\infty}(x)|\\
& \overset{\redlabel{a.7.34}{a}}{\leq} & \la^k \om_4(\la^k) + \sum_{i = 0}^{\infty} \la^k |b_{k+i} - b_{k+i+1}|\\
& \overset{\redlabel{b.7.34}{b}}{\leq} & \la^k \om_4(\la^k) + {C}_{{b}} \la^k\sum_{i = k}^{\infty} \om_4(\la^i),
\end{array}
\end{equation}
where to obtain \redref{a.7.34}{a}, we made use of \descref{a1}{A1} from Lemma \ref{lemma4.7} and to obtain \redref{b.7.34}{b}, we made use of \descref{a2}{A2} from Lemma \ref{lemma4.7}. 

From \eqref{eq4.28} and \eqref{eq4.27}, for a fixed $i \in \NN$, we get
\begin{equation*}
\om_4(\la^i)  \leq  \frac{1}{\tde} \sum_{j=0}^{i/2} \om_1(\la^{i-j}) \om_2(\la^j) +\frac{1}{\tde} \sum_{j=i/2}^{i} \om_1(\la^{i-j}) \om_2(\la^j) + \la^{i\alpha}.
\end{equation*}
Recall that $\om_1(\cdot)$ is monotone from \descref{mod1}{$\md_{\one}$} and making use of \eqref{eq4.355}, we get 
\begin{equation}
\label{eq4.60}
\begin{array}{rcl}
\sum_{i=k}^{\infty} \om_4(\la^i) & \leq & \frac{C}{\tde} \sum_{i=k}^{\infty} \om_1(\la^{i/2}) + \sum_{i=k}^{\infty} \la^{i\al} + \frac{1}{\tde} \sum_{i=k}^{\infty} \sum_{j=i/2}^i \om_1(\la^{i-j}) \om_2(\la^j)\\
& = & I + II + III.
\end{array}
\end{equation}

Before we estimate each of the terms of terms of \eqref{eq4.60}, we note that $\tde$ is fixed. Let us also define
\begin{equation}
\label{def_K}
\begin{array}{l}
K_1(\ve) :=  \sup_{a\geq 0}\int_{a}^{a+\ve^{1/2}} \frac{\om_1(t)}{t} \ dt, \quad K_2(\ve) := \ve^{\al}, \quad K_3(\ve) := \sup_{a \geq 0} \int_a^{a+\ve} \left[ f^{**}(\rho) \rho^{\frac{q}{n}} \right]^{\frac1q} \ \frac{d\rho}{\rho}.
\end{array}
\end{equation}

\begin{description}
\item[Estimate for $I$:] We estimate as follows:
\begin{equation}
\label{eq4.62}
I  \leq   C  \int_{0}^{\la^{\frac{k}{2}}} \frac{\om_1(t)}{t} \ dt \overset{\eqref{def_K}}{\leq} C K_1(\la^k).
\end{equation}
From \eqref{eq4.62} and the choice $|x| \approx \la^k$, it is easy to see that  $I \rightarrow 0$ as $|x| \rightarrow 0$. 
\item[Estimate for $II$:] We use the standard formula for  Geometric progressions to get
\begin{equation}
\label{eq4.63}
II \leq C \la^{k\al} \overset{\eqref{def_K}}{=} C K_2(\la^{k}).
\end{equation}
From \eqref{eq4.63} and the choice $|x| \approx \la^k$, it is easy to see that  $II \rightarrow 0$ as $|x| \rightarrow 0$. 

\item[Estimate for $III$:] In this case, we get
\begin{equation}
\label{eq4.64}
\begin{array}{rcl}
III & \leq & C \lbr \sum_{i=k/2}^{\infty} \om_2(\la^i)\rbr \lbr \sum_{i=1}^{\infty} \om_1(\la^i) \rbr \overset{\eqref{eq4.32}}{\leq} C \lbr \sum_{i=k/2}^{\infty} \om_2(\la^i)\rbr \\
& \overset{\redlabel{7.38a}{a}}{\leq} & C \int_0^{\la^{kn/2}} \left[ f^{**}(\rho) \rho^{\frac{q}{n}} \right]^{\frac1q} \ \frac{d\rho}{\rho}  \overset{\eqref{def_K}}{\leq}   C K_3(\la^k).
\end{array}
\end{equation}
To obtain \redref{7.38a}{a}, we made use of the estimates \cite[Equations (3.4) and (3.13)]{MR3169795} and the fact that since $\la<1$, we have
\[
\la^{nk/2} \leq \la^k\ \text{since $n \geq 2$}
\]
and hence 
\[
\int_0^{\la^{kn/2}} \left[ f^{**}(\rho) \rho^{\frac{q}{n}} \right]^{\frac1q} \ \frac{d\rho}{\rho} \leq \int_0^{\la^{k}} \left[ f^{**}(\rho) \rho^{\frac{q}{n}} \right]^{\frac1q} \ \frac{d\rho}{\rho}
\]

 From \eqref{eq4.64} and the choice $|x| \approx \la^k$, it is easy to see that  $III \rightarrow 0$ as $|x| \rightarrow 0$. 
\end{description}

\noindent \textbf{Claim}: Without loss of generality, we can assume that $K_i(\cdot)$'s  are $\al$-decreasing in the sense of Definition \ref{eta_decreasing} with $\al$ as in \eqref{al_alt}.

To prove the claim, we proceed as follows:
\begin{description}[leftmargin=*]
\item[$\al$-decreasing property of $K_1(\cdot)$] From the fact that $\om_1(\cdot)$ is a modulus of continuity and concave, we have that $K_1(\cdot)$ satisfies all the properties of Definition \ref{def_modulus} and hence is also a modulus of continuity. Using Theorem \ref{mod_concave}, without loss of generality, we can assume $K_1(\cdot)$ is also concave. Now replacing $K_1(s)$ with $K_1(s^{\al})$ if necessary, we can also assume $K_1(\cdot)$ is $\al$-decreasing in the sense of Definition \ref{eta_decreasing}.

\item[$\al$-decreasing property of $K_2(\cdot)$] This follows trivially from the definition of $K_2(\cdot)$ in \eqref{def_K}.

\medskip

\item[$\al$-decreasing property of $K_3(\cdot)$] From Definition \ref{def_modulus}, it is easy to see that $K_3(\cdot)$ is a modulus of continuity. Using Theorem \ref{mod_concave}, without loss of generality, we can assume $K_3(\cdot)$ is also concave. Now replacing $K_3(s)$ with $K_3(s^{\al})$, we can also assume $K_3(\cdot)$ is $\al$-decreasing in the sense of Definition \ref{eta_decreasing}.
\end{description}


With the new $K_i(\cdot)$'s which are now $\al$-decreasing,  we define
\begin{equation}
\label{def_K_combined}
  K(r) := K_1(r) + K_2(r)+K_3(r),
\end{equation}
which is again $\al$-decreasing.  Combining \eqref{eq4.62}, \eqref{eq4.63}, \eqref{eq4.64} and \eqref{eq4.60} along with \eqref{eq4.58} and making use the choice $|x| \approx \la^k$, we get 
\begin{equation*}
|u(x) - L_{\infty}(x)| \leq C \la^k K(\la^k) = C |x| K(|x|).
\end{equation*}
This completes the proof of the lemma.
\end{proof}

\begin{remark}
The $\al$-decreasing property of $K(\cdot)$  although not important in the proof of the above lemma, but  nevertheless it is crucially used  in the  proof of the main result when the interior and the boundary estimates are combined. 
\end{remark}




\section{Proof of the Main Theorem}
\label{section5}
In order to combine the interior regularity estimates  proved in \cite[Theorem 1.1]{MR3169795} with our boundary estimates, we need the following rescaled version of the interior estimates. 
\begin{proposition}
\label{prop5.1}
Let $u$ be a local viscosity solution of 
\begin{equation*}
F(x,D^2 u) = f(x) \txt{in} B_r \txt{for some} r \in (0,1). 
\end{equation*}
Then with modulus function $K(\cdot)$ as given in \eqref{def_K_combined}, there exists a universal constant $\Theta_0\in(0,1)$ and $C>0$ such that if $F$ has $\Theta_0$-BMO coefficients, then the following estimate holds:
\begin{equation*}
|\nabla u(0)|  \leq  \frac{C}{r} \lbr ||u||_{L^{\infty}(B_r)} + r K(r) \rbr.
\end{equation*}
Analogously, for any $y \in B_{r/2}$, there holds
\begin{equation*}
|\nabla u(y) - \nabla u(0)| \leq C \lbr K(|y|) + \frac{\|u\|_{L^{\infty}(B_r)}}{r^{1+\alpha}} |y|^{\al} + |y|^{\al} \rbr.
\end{equation*}

\end{proposition}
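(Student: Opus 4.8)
The plan is to scale $B_r$ to the unit ball, apply the interior gradient estimates for \eqref{main} on $B_1$ --- which are available from the interior theory of \cite{MR3169795} (cf.\ \eqref{md}), or which may equally well be re-derived by running the compactness scheme of Section \ref{section4} in the interior, where it simplifies considerably --- and then rewrite the modified Riesz potentials of $f$ that occur in terms of the modulus $K$ of \eqref{def_K_combined}, undoing the scaling. For $r\in(0,1)$ set $\tilde u(x):=u(rx)$ on $B_1$. Then $\tilde u$ is a $W^{1,q}$--viscosity solution of $\tilde F(x,D^2\tilde u)=\tf$ in $B_1$, where $\tilde F(x,M):=r^2F(rx,r^{-2}M)$ has the same ellipticity constants $(\La_0,\La_1)$ and, since $\Th_{\tilde F}(x,y)=\Th_F(rx,ry)$, BMO seminorm no larger than that of $F$ (so the same universal $\Th_0$ works), and $\tf(x):=r^2f(rx)$. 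A change of variables gives $\big(\fint_{B_\rho(x)}|\tf|^q\big)^{1/q}=r^2\big(\fint_{B_{r\rho}(rx)}|f|^q\big)^{1/q}$, hence ${\bf \tilde I}^{\tf}_q(x,R)=r\,{\bf \tilde I}^{f}_q(rx,rR)$ for all $R>0$; also $\nabla u(y)=r^{-1}\nabla\tilde u(y/r)$ and $\|\tilde u\|_{L^\infty(B_1)}=\|u\|_{L^\infty(B_r)}$. Thus it suffices to establish, for $\tilde u$ on $B_1$, a gradient bound $|\nabla\tilde u(0)|\le C\big(\|\tilde u\|_{L^\infty(B_1)}+\sup_x{\bf \tilde I}^{\tf}_q(x,2)\big)$ and a gradient oscillation bound $|\nabla\tilde u(z)-\nabla\tilde u(0)|\le C\big(\|\tilde u\|_{L^\infty(B_1)}+\sup_x{\bf \tilde I}^{\tf}_q(x,2)\big)|z|^{\al}+C\,{\bf \tilde I}^{\tf}_q(z,c|z|^{\de})$ for $z\in B_{1/2}$, and then scale back.

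These two estimates for $\tilde u$ are obtained by repeating Lemma \ref{lemma4.3}, Corollary \ref{corollary5.2}, Lemma \ref{lemma4.7} and Lemma \ref{lemma4.9} with $B_1$ replacing $\Om\cap B_1$. In the interior there is no boundary, no $\Ga$ and no $g$: one uses Caffarelli's interior $C^{1,\ga_0}$ estimate for $\tilde F_\infty(D^2h)=0$ (\cite{MR1005611}, with universal $\ga_0\in(0,1)$) in place of Proposition \ref{bnd_reg}, the approximants become full affine functions $\ell_k(x)=a_k+\iprod{b_k}{x}$, and only the moduli $\om_2$ and $r^{\al}$ (i.e.\ the $K_2$-- and $K_3$--parts of $K$) enter, since $\om_\Ga\equiv\om_g\equiv 0$. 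We fix the exponent $\al$ of \eqref{al_alt} so that, besides $\al<\be$, also $\al<\ga_0$ --- this is the additional constraint on $\al$ deferred to this proof. As in Lemma \ref{lemma4.5} one has $\sum_k\om_4(\la^k)<\infty$, so $\nabla\tilde u(0)=\lim_k b_k$ exists with $|\nabla\tilde u(0)|\le|b_0|+C\sum_k\om_4(\la^k)$, and $|\nabla\tilde u(z)-\nabla\tilde u(0)|$ is estimated by adding $\sum_{i\ge k}|b_{i+1}-b_i|$ to the local error $|\nabla(\tilde u-\ell_k)(z)|$ (controlled by an interior gradient estimate for the rescaled equation), exactly as in \eqref{eq4.58}--\eqref{eq4.64}.

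To finish, one passes back to $f$ and then to $K$. Using \eqref{eq4.344} and \cite[Eq.\ (3.4) and (3.13)]{MR3169795} as in the proof of Lemma \ref{lemma4.5}, together with $\rho^n\le\rho$ for $\rho<1$ (here $n\ge2$) and the subadditivity and $\al$--monotonicity of the concave, $\al$--decreasing $K_i$ --- the very mechanism that absorbed the power losses in \eqref{eq4.62}--\eqref{eq4.64}, and the reason $K_1$ carries $\ve^{1/2}$ in \eqref{def_K} --- one gets $\sup_y{\bf \tilde I}^{f}_q(y,\rho)\le CK(\rho)$ and ${\bf \tilde I}^{f}_q(y,c|y|^{\de})\le CK(|y|)$ for $\rho,|y|<1$. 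Combined with ${\bf \tilde I}^{\tf}_q(x,R)=r\,{\bf \tilde I}^{f}_q(rx,rR)$ and, for $z\in B_{1/2}$ with $rz=y$, $c r|z|^{\de}=c r^{1-\de}|y|^{\de}\le c|y|^{\de}$ (as $r<1$), this yields $\sup_x{\bf \tilde I}^{\tf}_q(x,2)\le CrK(r)$ and ${\bf \tilde I}^{\tf}_q(z,c|z|^{\de})\le CrK(|y|)$. Scaling back via $\nabla u(0)=r^{-1}\nabla\tilde u(0)$ gives the first assertion $|\nabla u(0)|\le\frac{C}{r}(\|u\|_{L^\infty(B_r)}+rK(r))$. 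For the second, with $y\in B_{r/2}$ and $z=y/r$,
\[
|\nabla u(y)-\nabla u(0)|=r^{-1}|\nabla\tilde u(z)-\nabla\tilde u(0)|\le \frac{C}{r}\big(\|u\|_{L^\infty(B_r)}+rK(r)\big)\frac{|y|^{\al}}{r^{\al}}+CK(|y|),
\]
and since $K$ is $\al$--decreasing (Lemma \ref{lemma4.9}) and $|y|\le r$ we have $\frac{K(r)}{r^{\al}}|y|^{\al}\le K(|y|)$; using also $K(|y|)\ge K_2(|y|)=|y|^{\al}$, the right side is bounded by $C\big(K(|y|)+\frac{\|u\|_{L^\infty(B_r)}}{r^{1+\al}}|y|^{\al}+|y|^{\al}\big)$, which is the claimed bound.

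The only genuinely delicate point is the last step: matching the scale at which the modified Riesz potential of $f$ shows up (which, after the dyadic iteration, carries a power loss --- witness the $\la^{k/2}$ and $\la^{kn/2}$ in \eqref{eq4.62} and \eqref{eq4.64}) with the precise normalizations built into \eqref{def_K}, namely the $\ve^{1/2}$ in $K_1$, the gain $\ve^{n/2}\le\ve$ in $K_3$ (which needs $n\ge2$), and the replacements $s\mapsto s^{\al}$ rendering each $K_i$ $\al$--decreasing --- and then converting $r$--scale quantities into $|y|$--scale ones via that $\al$--decreasing property. All remaining steps are bookkeeping parallel to Section \ref{section4}, and the alternative route through the quantitative interior estimate \eqref{md} of \cite{MR3169795} requires exactly the same translation.
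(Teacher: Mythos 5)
Your proposal is correct and follows essentially the same route as the paper: rescale to the unit ball, invoke the interior borderline gradient and oscillation estimates (from \cite[Theorems 1.2--1.3]{MR3169795} or, equivalently, an interior run of the Section \ref{section4} compactness scheme with Caffarelli's interior $C^{1,\ga_0}$ estimate replacing Proposition \ref{bnd_reg}), convert the truncated Riesz potentials into the modulus $K$ via \eqref{eq4.344}, and scale back using the $\al$-decreasing property of $K$. The only cosmetic difference is that the paper first normalizes by $A(r)=\max\{r^{1+\al},\|u\|_{L^\infty(B_r)}\}$ so as to quote the unit-scale estimate for a solution with $\|v\|_{L^\infty}\le 1$, whereas you apply the estimate in its scale-invariant form with $\|\tilde u\|_{L^\infty(B_1)}$ appearing linearly on the right; these are equivalent, and your write-up is in fact more explicit than the paper's about the ${\bf \tilde I}^{f}_q \mapsto K$ conversion.
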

\begin{proof}

We will first recall a scale invariant version of the interior estimates. Define
\begin{equation*}
A(r):= \max\{ r^{1+\al}, \|u\|_{L^{\infty}(B_r)}\} \txt{and} v(x):= \frac{u(rx)}{A(r)},
\end{equation*}
where $\al$ is the minimum of the exponent from \eqref{al_alt} and \cite[Theorem 1.2]{MR3169795}. 
It is easy to see that $\|v\|_{L^{\infty}(B_1)} \leq 1$ and  $v$ solves
\begin{equation*}
\tF(x,M):=\frac{r^2}{A(r)} F\lbr rx, \frac{A(r)}{r^2}D^2 v\rbr = \frac{r^2}{A(r)} f(rx) \txt{in} B_1.
\end{equation*}
Since $\al < 1$, we have  $r^2 \leq r^{1+\al} \leq A(r)$.
From \dref{obv5}{Observation 5}, we see that the rescaled problem has the same BMO-coefficients. Hence using either the estimates from before or from \cite[Theorem 1.2]{MR3169795}, we get
\begin{equation*}
|\nabla v(0)|     \overset{\eqref{def_K_combined}}{\leq}  C \lbr 1 + \frac{r}{A(r)} K(r) \rbr.
\end{equation*}
. Analogously, from \cite[Theorem 1.3]{MR3169795} or from our estimates  specialized to the interior case, we also get
\begin{equation*}
|\nabla v(x) - \nabla v(0)| \leq C \lbr \frac{r}{A(r)} K(r|x|) + |x|^{\al} \rbr\txt{for any} x \in B_{1/2}.
\end{equation*}
Rescaling back to $u$, we get
\begin{equation*}
|\nabla u(0)| \leq  \frac{C}{r} \lbr \|u\|_{L^{\infty}(B_r)}  + rK(r)\rbr,
\end{equation*}
where we used  $r^{\al} \leq K(r)$. 

Analogously, for $y \in B_{r/2}$ (using $y=rx$), there holds
\begin{equation*}
|\nabla u(y) - \nabla u(0)| \leq C \lbr K(|y|) + \frac{\|u\|_{L^{\infty}(B_r)}}{r^{1+\alpha}} |y|^{\al} + |y|^{\al} \rbr,
\end{equation*}
which completes the proof of the proposition.
\end{proof}

The next lemma establishes that "$\nabla u$" is continuous at the boundary. 

\begin{lemma}\label{lemma5.1}
Given any two points $y,z \in \pa \Om \cap B_{1/2}$,  there exists a universal constant $C$ such that the following estimate holds:
\begin{equation*}
|\nabla L_y - \nabla L_z|\leq C K(|y-z|).
\end{equation*}
Here $L_y$ and $L_z$ denotes the linear function constructed in Lemma \ref{lemma4.9} at the boundary points $y$ and $z$ respectively and $K(\cdot)$ is the modulus defined in \eqref{def_K_combined}.
\end{lemma}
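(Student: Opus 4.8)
The plan is to compare the two affine approximations $L_y$ and $L_z$ by evaluating them on $\Om$ at a scale comparable to $|y-z|$, and to exploit the fact that both $L_y$ and $L_z$ give good affine approximations to the \emph{same} solution $u$ near that scale. Set $r := |y-z|$ and, after the usual rotation/translation, assume we are in the setup of Lemma \ref{lemma4.9} centred at $y$ (so $L_y(y)=u(y)$ and $\nabla L_y = b_y e_n$ in the local frame at $y$). The point $z$ lies at distance $r$ from $y$, and since $\pa\Om\in C^{1,\dini}$ the two local coordinate frames at $y$ and $z$ differ by a rotation of size $\om_{\nabla\Ga}(r) \lesssim K(r)$; this contributes only a $K(r)$-sized error to $|\nabla L_y - \nabla L_z|$ and can be absorbed at the end, so I would first work as if the frames agree.

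The main step is the following: pick a point $w \in \Om$ with $\dist(w,\pa\Om) \approx r$ and $|w-y|\approx |w-z|\approx r$ (possible by the cone/graph condition on a $C^{1,\dini}$ domain). From Lemma \ref{lemma4.9} applied at $y$ and at $z$ respectively,
\[
|u(w) - L_y(w)| \leq C_{\text{aff}} |w-y|\, K(|w-y|) \leq C r\, K(Cr),
\qquad
|u(w) - L_z(w)| \leq C r\, K(Cr),
\]
using that $K$ is increasing. Hence $|L_y(w) - L_z(w)| \lesssim r K(Cr)$. The same estimates at $w' := y$ (a boundary point) give $|L_y(y) - u(y)| = 0$ and $|L_z(y) - u(y)| \leq C r K(Cr)$, so $|L_y(y) - L_z(y)| \lesssim r K(Cr)$. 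Now $L_y - L_z$ is an affine function on $\RR^n$; we control its values at two points $y$ and $w$ whose separation is $\approx r$ in the direction transverse to $\pa\Om$ (the only direction in which $L_y,L_z$ are non-constant, as both are of the form $b\,x_n$). Subtracting the two pointwise bounds and dividing by $|w-y|\approx r$ yields
\[
|\nabla L_y - \nabla L_z| = \frac{|(L_y - L_z)(w) - (L_y - L_z)(y)|}{|w - y|} \cdot (1 + o(1)) \leq C\, K(C|y-z|),
\]
where the $(1+o(1))$ accounts for the fact that $w-y$ need not be exactly parallel to $e_n$; this is fine because we only need $w-y$ to have a component of size $\approx r$ along $e_n$, which the geometry of the $C^{1,\dini}$ graph guarantees (a ``non-tangential'' choice of $w$).

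Finally I would add back the rotation error between the two local frames, which by $\dini$-continuity of the normal is bounded by $C\,\om_{\nabla\Ga}(|y-z|) \leq C K(|y-z|)$ since $\om_{\Ga}$ is one of the moduli dominated by $K$ (through $\md_{\one}$ and $K_1$). Combining, $|\nabla L_y - \nabla L_z| \leq C K(|y-z|)$ with a universal $C$, and absorbing the implicit constant inside the argument of $K$ using that $K$ is a modulus of continuity (or simply enlarging $C$ and using subadditivity). The step I expect to be the main obstacle is the bookkeeping around the mismatched coordinate frames at $y$ and $z$: one must check that replacing ``$L = b x_n$ in the frame at $y$'' by its expression in the frame at $z$ only perturbs the gradient by $O(K(|y-z|))$, and that the auxiliary point $w$ can be chosen non-tangentially with respect to \emph{both} boundary points simultaneously while staying at scale $\approx |y-z|$; both are true for $C^{1,\dini}$ domains but require the quantitative graph estimates from Definition \ref{dini_domain} and Observation \dref{obv2}{Observation 2}.
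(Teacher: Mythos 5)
Your proposal reaches the right conclusion but by a genuinely different route from the paper. The paper's proof also picks a non-tangential point $x$ with $|x-y|\approx |x-z|\approx r$ and $B_{\beta r}(x)\subset\Om$, but then it applies the rescaled \emph{interior gradient estimate} (Proposition \ref{prop5.1}) to $v=u-L_y$ on $B_{\beta r}(x)$: combining that estimate with the sup-norm bound $\|u-L_y\|_{L^\infty(B_{\beta r}(x))}\leq CrK(r)$ from Lemma \ref{lemma4.9} gives $|\nabla u(x)-\nabla L_y|\leq CK(r)$ as a full vector inequality, the same for $z$, and the triangle inequality finishes. You instead use only the sup-norm affine approximation at two points ($y$ and $w$) together with the affineness of $L_y-L_z$, avoiding Proposition \ref{prop5.1} entirely; this is more elementary, but it buys you only the directional derivative $\langle\nabla(L_y-L_z),w-y\rangle$, so the components of $\nabla(L_y-L_z)$ orthogonal to $w-y$ must be controlled by separate means. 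You correctly flag the frame rotation, which contributes $|b|\,\om_{\nabla\Ga}(|y-z|)\leq CK(|y-z|)$ since $|b|\leq C_bC_{bnd}$, but note that after the reductions the full affine approximations also carry the tangential piece $\langle\nabla g(\cdot),x-\cdot\rangle$ subtracted at each basepoint, so $\nabla(L_y-L_z)$ contains the term $\nabla g(y)-\nabla g(z)$ as well; this is likewise $\leq\om_{\nabla g}(|y-z|)\leq CK(|y-z|)$ by the $\dini$-continuity of $\nabla g$, but it must be estimated explicitly rather than absorbed into the rotation. With that additional tangential bookkeeping, your decomposition $|\nabla L_y-\nabla L_z|\leq |b_y-b_z|+CK(|y-z|)$ closes, and $|b_y-b_z|$ is exactly what your two-point difference quotient controls, since $\langle\nu_y,w-y\rangle\approx r$ for a non-tangential $w$. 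So the argument is sound; the paper's version is shorter because comparing $\nabla u(x)$ to each affine gradient sidesteps the directional-versus-full-gradient issue altogether, at the cost of invoking the interior $C^1$ estimate.
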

\begin{proof}
Let $|y-z|=r$ and choose a "non tangential"  point $x \in \Om$ such that $|x-y| \approx r$ and $|x-z| \approx r$. Furthermore, let $B_{\beta r}(x) \subset \Om$ for a universal $\be$ which can be chosen independent of $r$ and depending only on the Lipschitz character of $\Om$.

We see that  $v := u - L_y$ solves the same equation from \eqref{main}, thus from Proposition \ref{prop5.1}, we get
\begin{equation}
\label{eq5.12}
|\nabla v(x)| = |\nabla u(x) - \nabla L_y| \leq \frac{C}{r} \lbr \|u - L_y\|_{L^{\infty}(B_{\beta r}(x))} + r K(r) \rbr.
\end{equation}
From the boundary regularity estimate in Lemma \ref{lemma4.9}, we have
\begin{equation}
\label{eq5.13}
\|u - L_y\|_{L^{\infty}(B_{\be r}(x)} \leq C r K(r).
\end{equation}
Thus combining \eqref{eq5.12} and \eqref{eq5.13}, we get
\begin{equation}
\label{eq.12}
|\nabla u(x) - \nabla L_y| \leq \frac{C}{r} \lbr r K(r) + rK(r) \rbr  \leq C K(r).
\end{equation}
Likewise, since $|x-z| \approx r$, we also get
\begin{equation}
\label{eq.13}
|\nabla u(x) - \nabla L_z| \leq \frac{C}{r} \lbr r K(r) + rK(r) \rbr  \leq C K(r).
\end{equation}
Combining \eqref{eq.12} and \eqref{eq.13} with an application of triangle inequality, we get
\begin{equation*}
|\nabla L_y - \nabla L_z| \leq C K(r) = C K(|y-z|),
\end{equation*}
which completes the proof of the lemma.
\end{proof}

\subsection{Proof of Theorem \ref{main_thm}}
 Let $y,z \in \Om \cap B_{\frac14}$ be given.
We shall denote the points $y_0 \in \pa \Om$ and $z_0 \in \pa \Om$ to be the points such that the following holds:
\begin{equation}
\label{eq5.17}
d(y,y_0) = \min d(y, \pa \Om) \txt{and} d(z,z_0) = \min d(z,\pa \Om).
\end{equation}

Without loss of generality, let us assume that 
\begin{equation}
\label{5.18}
\de:= d(y,\pa \Om) = \max\{d(y,\pa \Om), d(z,\pa \Om)\},
\end{equation}
and split the proof into two cases.
\begin{description}
\item[Case  $d(y,z) \leq \frac{\de}{2}$] From \eqref{5.18}, we see that  $z \in B_{\de}(y) \subset \Om$. Using the notation from \eqref{eq5.17}, let 
us consider the function $v:= u-L_{y_0}$ which still solves the same problem from \eqref{main}. Then from the rescaled estimates in Proposition \ref{prop5.1}, we get
\begin{equation}
\label{eq5.21}
\begin{array}{rcl}
|\nabla v(y) - \nabla v(z)| & = & |\nabla u(y) - \nabla u(z)|\\
& \leq & C \lbr K(|y-z|) +  \frac{\|u-L_{y_0}\|_{L^{\infty}(B_{\delta})}}{\de} \frac{|y-z|^{\al}}{\de^{\al}} +  |y-z|^{\al}\rbr.
\end{array}
\end{equation}
Using Lemma \ref{lemma4.9}, we know that  $\|u-L_{y_0}\|_{L^{\infty}(B_{\delta})} \leq\de K(\de)$ and from \eqref{def_K_combined}, we have the bound $|y-z|^{\al} \leq K(|y-z|)$ which combined with \eqref{eq5.21} gives
\begin{equation}
\label{eq5.22}
|\nabla u(y) - \nabla u(z)| \leq C \lbr K(|y-z|) + \frac{K(\de)}{\de^{\al}} |y-z|^{\al}\rbr.
\end{equation}
Since  $K(\cdot)$ is $\al$-decreasing, therefore this  implies
\begin{equation}
\label{eq5.23}
\frac{K(\de)}{\de^{\al}} |y-z|^{\al} \leq K(|y-z|).
\end{equation}
Thus, combining \eqref{eq5.22} and \eqref{eq5.23}, we get
\begin{equation*}
|\nabla u(y) - \nabla u(z)| \leq C K(|y-z|)
\end{equation*}

\item[Case $d(y,z) > \frac{\de}{2}$] Using the notation from \eqref{eq5.17} and making use of triangle  inequality along with \eqref{5.18}, we have 
\begin{equation}
\label{eq5.25}
d(y_0,z) \leq d(y,y_0) + d(y,z)  = \de + d(y,z) <  3 d(y,z).
\end{equation}
From the choice of $z_0$ in \eqref{eq5.17}, we also get
\begin{equation}
\label{eq5.26}
d(z_0,z) \leq d(y_0,z) \overset{\eqref{eq5.25}}{\leq} 3 d(y,z)
\end{equation}

Combining \eqref{eq5.25} and \eqref{eq5.26}, we get
\begin{equation}
\label{eq5.27}
d(z_0,y_0) \leq d(z_0,z) + d(z,y) + d(y,y_0) \leq 7 d(y,z).
\end{equation}

We now have the following sequence of estimates:
\begin{equation*}
\begin{array}{rcl}
|\nabla u(y) - \nabla u(z)| & \leq &  |\nabla u(y) - \nabla L_{y_0}| + |\nabla L_{y_0} - \nabla L_{z_0}| + |\nabla L_{z_0} - \nabla u(z)| \\
& \overset{\text{Lemma \ref{lemma4.9}}}{\leq} & C K(|y-y_0|) + |\nabla L_{y_0} - \nabla L_{z_0}| + C K_ (|z-z_0|) \\
& \overset{\text{Lemma \ref{lemma5.1}}}{\leq} & CK(|y-z|) + C K(|y_0-z_0|) \\
& \overset{\eqref{eq5.27}}{\leq} & C K(C|y-z|).
\end{array}
\end{equation*}
\end{description}
This completes the proof of the Theorem.


\begin{thebibliography}{10}

\bibitem{arXiv:1804.06697}
Agnid Banerjee, Nicola Garofalo, and Isidro Munive, \emph{{Compactness methods
  for $\Gamma^{1,\alpha}$ boundary Schauder estimates on Carnot groups}},
  (2018), Submitted, \href{https://arxiv.org/abs/1804.06697}{arXiv:1804.06697}.

\bibitem{MR1005611}
Luis~A. Caffarelli, \emph{Interior a priori estimates for solutions of fully
  nonlinear equations}, Ann. of Math. (2) \textbf{130} (1989), no.~1, 189--213.
  \MR{1005611}

\bibitem{MR1038360}
\bysame, \emph{Interior {$W^{2,p}$} estimates for solutions of the
  {M}onge-{A}mp\`ere equation}, Ann. of Math. (2) \textbf{131} (1990), no.~1,
  135--150. \MR{1038360}

\bibitem{MR1038359}
\bysame, \emph{A localization property of viscosity solutions to the
  {M}onge-{A}mp\`ere equation and their strict convexity}, Ann. of Math. (2)
  \textbf{131} (1990), no.~1, 129--134. \MR{1038359}

\bibitem{MR1351007}
Luis~A. Caffarelli and Xavier Cabr\'e, \emph{Fully nonlinear elliptic
  equations}, American Mathematical Society Colloquium Publications, vol.~43,
  American Mathematical Society, Providence, RI, 1995. \MR{1351007}

\bibitem{MR3169795}
Panagiota Daskalopoulos, Tuomo Kuusi, and Giuseppe Mingione, \emph{Borderline
  estimates for fully nonlinear elliptic equations}, Comm. Partial Differential
  Equations \textbf{39} (2014), no.~3, 574--590. \MR{3169795}

\bibitem{MR2823872}
Frank Duzaar and Giuseppe Mingione, \emph{Gradient estimates via non-linear
  potentials}, Amer. J. Math. \textbf{133} (2011), no.~4, 1093--1149.
  \MR{2823872}

\bibitem{MR1237053}
Luis Escauriaza, \emph{{$W^{2,n}$} a priori estimates for solutions to fully
  nonlinear equations}, Indiana Univ. Math. J. \textbf{42} (1993), no.~2,
  413--423. \MR{1237053}

\bibitem{MR0437947}
L.~I. Kamynin and B.~N. Him{\v{c}e}nko, \emph{A maximum principle and
  {L}ipschitz boundary estimates for the solution of a second order
  elliptic-parabolic equation}, Sibirsk. Mat. \v Z. \textbf{15} (1974),
  343--367, 461. \MR{0437947}

\bibitem{MR2900466}
Tuomo Kuusi and Giuseppe Mingione, \emph{Universal potential estimates}, J.
  Funct. Anal. \textbf{262} (2012), no.~10, 4205--4269. \MR{2900466}

\bibitem{MR3004772}
\bysame, \emph{Linear potentials in nonlinear potential theory}, Arch. Ration.
  Mech. Anal. \textbf{207} (2013), no.~1, 215--246. \MR{3004772}

\bibitem{MR3174278}
\bysame, \emph{Guide to nonlinear potential estimates}, Bull. Math. Sci.
  \textbf{4} (2014), no.~1, 1--82. \MR{3174278}

\bibitem{MR3247381}
\bysame, \emph{A nonlinear {S}tein theorem}, Calc. Var. Partial Differential
  Equations \textbf{51} (2014), no.~1-2, 45--86. \MR{3247381}

\bibitem{MR818099}
Gary~M. Lieberman, \emph{The {D}irichlet problem for quasilinear elliptic
  equations with continuously differentiable boundary data}, Comm. Partial
  Differential Equations \textbf{11} (1986), no.~2, 167--229. \MR{818099}

\bibitem{MR0350177}
J.-L. Lions and E.~Magenes, \emph{Non-homogeneous boundary value problems and
  applications. {V}ol. {I}}, Springer-Verlag, New York-Heidelberg, 1972,
  Translated from the French by P. Kenneth, Die Grundlehren der mathematischen
  Wissenschaften, Band 181. \MR{0350177}

\bibitem{MR0213785}
G.~G. Lorentz, \emph{Approximation of functions}, Holt, Rinehart and Winston,
  New York-Chicago, Ill.-Toronto, Ont., 1966. \MR{0213785}

\bibitem{MR2853528}
Feiyao Ma and Lihe Wang, \emph{Boundary first order derivative estimates for
  fully nonlinear elliptic equations}, J. Differential Equations \textbf{252}
  (2012), no.~2, 988--1002. \MR{2853528}

\bibitem{MR3246039}
Luis Silvestre and Boyan Sirakov, \emph{Boundary regularity for viscosity
  solutions of fully nonlinear elliptic equations}, Comm. Partial Differential
  Equations \textbf{39} (2014), no.~9, 1694--1717. \MR{3246039}

\bibitem{MR607898}
E.~M. Stein, \emph{Editor's note: the differentiability of functions in {${\bf
  R}^{n}$}}, Ann. of Math. (2) \textbf{113} (1981), no.~2, 383--385.
  \MR{607898}

\bibitem{MR1606359}
Andrzej \'Swiech, \emph{{$W^{1,p}$}-interior estimates for solutions of fully
  nonlinear, uniformly elliptic equations}, Adv. Differential Equations
  \textbf{2} (1997), no.~6, 1005--1027. \MR{1606359}

\bibitem{MR1139064}
Lihe Wang, \emph{On the regularity theory of fully nonlinear parabolic
  equations. {II}}, Comm. Pure Appl. Math. \textbf{45} (1992), no.~2, 141--178.
  \MR{1139064}

\bibitem{MR2486925}
Niki Winter, \emph{{$W^{2,p}$} and {$W^{1,p}$}-estimates at the boundary for
  solutions of fully nonlinear, uniformly elliptic equations}, Z. Anal. Anwend.
  \textbf{28} (2009), no.~2, 129--164. \MR{2486925}

\end{thebibliography}
\providecommand{\bysame}{\leavevmode\hbox to3em{\hrulefill}\thinspace}
\providecommand{\MR}{\relax\ifhmode\unskip\space\fi MR }
\providecommand{\MRhref}[2]{%
  \href{http://www.ams.org/mathscinet-getitem?mr=#1}{#2}
}
\providecommand{\href}[2]{#2}

\end{document}